\documentclass{amsart}
\usepackage[utf8]{inputenc}
\usepackage{amsthm}
\usepackage{ragged2e}
\usepackage[english]{babel}

\usepackage{amssymb}
\usepackage[utf8]{inputenc}
\usepackage{hyperref}
\hypersetup{
    colorlinks=true,
    linkcolor=blue,
    citecolor=blue,
    urlcolor=blue,
    pdfauthor={Juan Flores Torres},
    pdftitle={Moduli space of bi-invariant metrics}
}
\usepackage{tikz-cd}

%\usepackage{natbib}
%\setcitestyle{square}

\newtheorem{Thm}{Theorem}[subsection]

\newtheorem{Def}[Thm]{Definition}

\newtheorem{Prop}[Thm]{Proposition}

\newtheorem{Lema}[Thm]{Lemma}

\newtheorem{Coro}[Thm]{Corollary}

\newtheorem{Rem}[Thm]{Remark}

\newtheorem{SThm}[Thm]{Splitting Theorem}

\newtheorem{Wthm}[Thm]{Theorem of Wallach}

\newtheorem{Mtheorem}[Thm]{Theorem}%[section]

\title{Moduli Space of Bi-Invariant Metrics}
\author[J. Flores Torres]{Juan Flores Torres}
\address[J. Flores Torres]{Instituto de Matemáticas UNAM, México}
\email{juan.flores@ciencias.unam.mx}
\thanks{Supported by CONACyT scholarship No. 848752.}

\date{\today}

\subjclass[2010]{58D17,53C30}
\keywords{Moduli space, Bi-invariant metrics, Lie groups}

\begin{document}

\maketitle

%\section{Abstract}
\begin{abstract}
In this work, we focus on describing  the space of bi-invariant metrics in a Lie group up to isometry. I.e, that is, metrics invariant under both left and right translations. We show that $\mathfrak{BI}$, the moduli space of bi-invariant metrics, is an orbifold. Moreover we give an explicit description of this orbifold, and of $\mathfrak{EBI}$, the space of bi-invariant metrics equivalent under isometries and scalar multiplies.
%problem of describing the space of left-invariant metrics on a Lie group (see \cite{Kodama}) for the case o
\end{abstract}

\section{Introduction}

%QUE?

In the study of Riemannian manifolds, it is of interest to show how complicated is the collection of Riemannian geometries satisfying a certain property. For example, given a manifold $M$ we could explore the class of all Riemannian geometries defined on $M$, such that the curvature induced by these geometries has a fixed lower bound; we could also consider the class of all  geometries that turn $M$ into an Einstein manifold. Moreover, one can ask if given two geometries with the desired property, we can deform one into the other via geometries satisfying the given property. This can be quantified in a precise way, by considering the \emph{moduli space} of all Riemannian metrics satisfying the desired property. This set consist of all the Riemannian metrics up to isometry, equipped with the $C^\infty$-Whitney topology (see \cite{TuschmannWraith}).

For a Lie group $G$,  a natural interesting  family of  Riemannian geometries consist of the ones that have $G$ contained in their isometry group, i.e. left or right invariant Riemannian metrics. A smaller subfamily  consist on the Riemannian geometries that are both left and right invariant, that is \emph{bi-invariant} metrics. These Riemannian metrics satisfy non-negative lower curvature bounds (see \cite{Milnor}), and have important applications when studying spaces with a lower curvature bound (see \cite{Searle}).

In this work we study the topology of the moduli space of bi-invariant metrics of a compact Lie group, obtaining the following theorem. 

\begin{Mtheorem}\label{Teorema Intro}
Consider a compact Lie group $G$ with Lie algebra $\mathfrak{g} =\mathfrak{a}\oplus \mathfrak{s}\oplus \mathfrak{b}_1\oplus \cdots\oplus \mathfrak{b}_l$, where $\mathfrak{s}$, $\mathfrak{b}_1,\cdots \mathfrak{b}_l$,  are semi-simple subalgebras, such that any two factors of  $\mathfrak{s}$  are non isomorphic to each other, each $\mathfrak{b}_i$ decomposes as the sum of $m_{i}$ isomorphic factors, no simple factor of $\mathfrak{s}$ is isomorphic to any simple factor of any $\mathfrak{b}_{i}$, and $\mathfrak{a}$ is an abelian subalgebra. Then the moduli space of bi-invairant metrics $\mathfrak{BI}(\mathfrak{g})$ is a contractible orbifold homeomorphic to 
$$\widetilde{\mathfrak{BI}}(\mathfrak{s})\times SP^{m_{1}}(\mathbb{R})\times\cdots\times SP^{m_{l}}(\mathbb{R}).$$
Here $\widetilde{\mathfrak{BI}}(\mathfrak{s})$ is the space of all possible bi-invariant metrics on $\mathfrak{s}$, and $SP^{m_{i}}(\mathbb{R})$ is the $m_i$th symmetric product of $\mathbb{R}$.
\end{Mtheorem}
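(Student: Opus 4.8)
The plan is to reduce the statement to the Lie-algebra level and then analyze the action of the automorphism group on the space of $\mathrm{Ad}$-invariant inner products. Recall that bi-invariant metrics on $G$ correspond bijectively to $\mathrm{Ad}$-invariant inner products on $\mathfrak{g}$, and, as established earlier, two such metrics are isometric exactly when the corresponding inner products lie in the same orbit of $\mathrm{Aut}(\mathfrak{g})$; thus the moduli space is the quotient $\widetilde{\mathfrak{BI}}(\mathfrak{g})/\mathrm{Aut}(\mathfrak{g})$. The whole argument then splits into (i) describing $\widetilde{\mathfrak{BI}}(\mathfrak{g})$, (ii) computing the quotient, and (iii) reading off the topological and orbifold structure.

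First I would show that every $\mathrm{Ad}$-invariant inner product is block-diagonal with respect to the decomposition of $\mathfrak{g}$ into its center $\mathfrak{a}$ and its simple ideals. The center is orthogonal to $[\mathfrak{g},\mathfrak{g}]$ for any invariant form, and two distinct simple ideals $\mathfrak{c},\mathfrak{c}'$ are orthogonal because invariance gives $\langle [\mathfrak{c},\mathfrak{c}],\mathfrak{c}'\rangle = 0$, and $[\mathfrak{c},\mathfrak{c}] = \mathfrak{c}$ then forces $\langle \mathfrak{c},\mathfrak{c}'\rangle = 0$; equivalently, distinct simple ideals are non-isomorphic as adjoint $\mathfrak{g}$-modules, so Schur's lemma annihilates all cross terms. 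On each simple ideal the invariant inner products form a ray $\mathbb{R}_{>0}$ (the positive multiples of the Killing form), and on $\mathfrak{a}$ they form $\mathrm{Sym}^+(\dim\mathfrak{a})$. Hence $\widetilde{\mathfrak{BI}}(\mathfrak{g})\cong \mathrm{Sym}^+(\dim\mathfrak{a})\times\mathbb{R}_{>0}^{k}$, where $k$ is the total number of simple ideals.

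Next I would invoke the Splitting Theorem together with the structure $\mathrm{Aut}(\mathfrak{g}) = GL(\mathfrak{a})\times\mathrm{Aut}([\mathfrak{g},\mathfrak{g}])$, where $\mathrm{Aut}([\mathfrak{g},\mathfrak{g}])$ is a product of wreath products $\mathrm{Aut}(\mathfrak{h})^{m}\rtimes S_{m}$, one for each isomorphism type of simple ideal of multiplicity $m$, to compute the quotient factor by factor. On the abelian block $GL(\mathfrak{a})$ acts transitively on $\mathrm{Sym}^+(\dim\mathfrak{a})$, collapsing it to a point. On the semisimple part, inner automorphisms fix every $\mathrm{Ad}$-invariant inner product, so only the \emph{outer} action $\mathrm{Out}(\mathfrak{g})=\mathrm{Aut}(\mathfrak{g})/\mathrm{Inn}(\mathfrak{g})$ matters; moreover every automorphism preserves the Killing form of each simple factor, so factor-wise (including diagram) automorphisms act trivially on the scalar parameters, and the only nontrivial effect is that $S_{m}$ permutes the scalars attached to mutually isomorphic factors. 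Since the factors of $\mathfrak{s}$ have multiplicity one, their rays survive untouched and reassemble into $\widetilde{\mathfrak{BI}}(\mathfrak{s})=\mathbb{R}_{>0}^{r}$, while each $\mathfrak{b}_i$ contributes $\mathbb{R}_{>0}^{m_i}/S_{m_i}=SP^{m_i}(\mathbb{R}_{>0})$; using $\mathbb{R}_{>0}\cong\mathbb{R}$ yields the claimed homeomorphism.

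Finally I would verify the orbifold and contractibility claims: each $SP^{m_i}(\mathbb{R}_{>0})$ is the quotient of a manifold by a finite group, hence an orbifold, and a finite product of orbifolds is an orbifold, while $\widetilde{\mathfrak{BI}}(\mathfrak{s})$ is an honest manifold; contractibility follows from the componentwise scaling homotopy, which is symmetric and therefore descends to each symmetric product. The step I expect to be the main obstacle is the bookkeeping in the third paragraph: checking carefully that no invariant off-diagonal terms survive between simple ideals that are isomorphic as Lie algebras but not as adjoint $\mathfrak{g}$-modules, and confirming that the Killing-form normalization forces all non-permutation automorphisms to act trivially, so that the effective action on $\mathbb{R}_{>0}^{k}$ is precisely by the permutation groups $S_{m_i}$.
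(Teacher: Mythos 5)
Your proposal is correct and follows the same overall architecture as the paper: identify $\mathfrak{BI}(\mathfrak{g})$ with the quotient $\widetilde{\mathfrak{BI}}(\mathfrak{g})/\mathrm{Aut}(\mathfrak{g})$, show the center contributes nothing because $GL(\mathfrak{a})$ acts transitively on its inner products, identify $\widetilde{\mathfrak{BI}}$ of the semisimple part with a positive cone of Killing-form coefficients $(\alpha_1,\dots,\alpha_k)$, and observe that automorphisms act on these coefficients only by permuting the slots of mutually isomorphic simple ideals. The one step where you genuinely diverge is the step you yourself flag as the main obstacle: why the non-permutation part of an automorphism acts trivially on the scalars. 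You argue this from the naturality of the Killing form ($B(\phi x,\phi y)=B(x,y)$ for every automorphism, and the Killing form of an ideal is the restriction of the Killing form of $\mathfrak{g}$), which handles all automorphisms at once, including those interchanging isomorphic factors. The paper instead proves first that inner automorphisms fix every bi-invariant metric, then invokes Murakami's theorem that $\mathrm{Out}(\mathfrak{s})$ is finite and concludes from the fact that the multiplicative group $(\mathbb{R}^{+})^{k}$ has no nontrivial finite subgroups (Lemma on trivial outer action and Lemma \ref{compsimpinv}). Your route is more self-contained and arguably cleaner, since it avoids the finiteness of the outer automorphism group entirely; the paper's route has the advantage of isolating the inner/outer dichotomy, which it reuses elsewhere. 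Your explicit orthogonality argument for distinct simple ideals (via $[\mathfrak{c},\mathfrak{c}]=\mathfrak{c}$ and skew-adjointness, or Schur applied to the adjoint $\mathfrak{g}$-module structure) also makes precise a point the paper's proof of Theorem \ref{bitilde} leaves implicit. The only item you should still spell out is that every permutation in $S_{m_i}$ is actually realized by some automorphism of $\mathfrak{g}$ (so that the quotient is exactly $SP^{m_i}$ and not smaller); this is easy from an isomorphism between the factors, and the paper is equally terse on it.
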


% POR QUE?

As mentioned before bi-invariant metrics are useful in the construction of manifolds with lower curvature bounds.  For example given a Lie group acting by isometries, Cheeger \cite{Cheeger} gave a deformation procedure that will preserve non-negative curvature bounds, by fixing a bi-invariant metric. With this deformation one can construct new spaces with positive Ricci curvature \cite{SearleWilhelm}. Moreover, for closed manifolds of cohomogeneity one, Grove and Ziller used a fixed bi-invariant metric to show the existence of invariant Riemannian metrics with non-negative curvature \cite{GroveZiller}.

Thus it is relevant to study how bi-invariant metrics relate to each other. Due to the strong geometric properties of these metrics, it  is natural that the homotopy information of the moduli space of bi-invariant metrics is simple. This is in contrast to the moduli-space of left invariant metrics as observed by Kodama, Takahara and Tamaru \cite{Kodama}.

Nonetheless, we believe it is useful to have a full description of the topology of this moduli space. In particular observe that the conclusion in Theorem~\ref{Teorema Intro} states that the moduli space of bi-invariant metrics of a compact Lie group is an orbifold, and moreover, we can read the orbifold stratification from the Lie algebra decomposition. 

% Explicar que la desomstración del teorema utiliza solo que el grupo de Lie admita una metrica bi-invariante.

It is important to note that the characterization of Theorem \ref{Teorema Intro} only relies on the group admitting a bi-invariant metric and the decomposition of the semi-simple factor in the Lie algebra, not on the compactness of the Lie group. The compactness stated in Theorem \ref{Teorema Intro} is stated to guarantee the existence of a bi-invariant metric. We recall that there are examples of non-compact and non-abelian simple Lie groups, such as $SL(2,\mathbb{C})$, that do not admit bi-invariant metrics.

% Mencionar el ejemplo de $\mathfrak{sl}(2)$.

% CÓMO? 

% Estudiamos como la descompsición del algebra de Lie nos da información sobre la acción de el grupo de autormorphismos. Y que el grupo de automorphismos coincide con los difeomorphismos del grupo de Lie.

We start in Section \ref{Section 1} by collecting results that characterize Lie groups and Lie algebras that possess bi-invariant metrics. Specifically, we discover that the decomposition of these Lie algebras yields comprehensive insights into the behavior of the automorphism group on the space of bi-invariant metrics. This enables us to describe the space of bi-invariant metrics in Section \ref{Section 3}. We do this using the factorization of the Lie algebra and a factorization of the action of the automorphism group of the Lie algebra. Likewise, we employ the same procedure to obtain the space of equivalent bi-invariant metrics, encompassing scalar multiples as well. In Section \ref{Section 4} we present explicitly the moduli spaces of bi-invariant metrics for Lie groups of dimension at most $6$. We end by restating in Section \ref{Section 5} some results from \cite{Milnor}.

\section*{Acknowledgments}

I would like to express my heartfelt gratitude to Dr. Diego Corro for his invaluable support and guidance in this research. I also want to thank my family for their unconditional support. I dedicate this work to the memory of my father, Juan Flores Fuentes, who guided me through my early abstract processes. Without their support, this research would not have been possible.

\section{The Geometry of a Lie Group}\label{Section 1}

A left-invariant metric on a Lie group $G$ is a Riemannian metric that is invariant under left translations. It is easy to see that left-invariant metrics on $G$ correspond to inner products on the Lie algebra $\mathfrak{g}$. Thus, the space $\widetilde{\mathfrak{M}}(\mathfrak{g})$ (defined in \cite{Kodama} by Kodama, Takahara and Tamaru) consisting of inner products on the Lie algebra $\mathfrak{g}$ corresponds to the space of left-invariant metrics on $G$. 

\subsection{Bi-invariant metrics}

A Riemannian metric on $G$ is called bi-invariant if it is
invariant under both left and right translation. 
\\

We begin by recalling some facts about the adjoint representation. Each $g\in G$ defines an inner automorphism $\psi_{g}:G\rightarrow G$, $\psi_{g}(x)=gxg^{-1}$. Using this, we can define a group homomorphism $Ad:G\rightarrow Aut(\mathfrak{g})$, called the adjoint representation of $G$, where $Ad(g):\mathfrak{g}\rightarrow\mathfrak{g}$ is the differential of $\psi_{g}$ at the identity element of $G$. The differential of $Ad:G\rightarrow Aut(\mathfrak{g})$ gives us a representation between the Lie algebras $ad:\mathfrak{g}\rightarrow Der(\mathfrak{g})$, and thus we obtain the following commutative diagram.

$$
\begin{tikzcd}
\mathfrak{g}\arrow{r}{ad} \arrow[swap]{d}{exp} & Der(\mathfrak{g}) \arrow{d}{\mathrm{e}} \\%
		G \arrow{r}{Ad}& Aut(\mathfrak{g}).
		\end{tikzcd}
$$

Where ``$\mathrm{e}$'' denotes the exponential map of $Aut(\mathfrak{g})$. Next, we state a couple results regarding bi-invariant metrics that can be found in \cite{Milnor}.

\begin{Lema}\label{adinbi}

A left-invariant metric in a Lie group $G$ is right-invariant if and only if $Ad(g):\mathfrak{g}\rightarrow\mathfrak{g}$ is an isometry for any element $g\in G$.
    
\end{Lema}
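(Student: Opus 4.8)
The plan is to reduce right-invariance to a condition on the single inner product on $\mathfrak{g}$ by exploiting the factorization of inner automorphisms through left and right translations. Writing $L_g$ and $R_g$ for left and right translation by $g$, I first recall that a left-invariant metric is completely determined by an inner product $\langle\cdot,\cdot\rangle$ on $\mathfrak{g}=T_eG$, and that by definition every $L_g$ is an isometry of this metric; I will write $\langle\cdot,\cdot\rangle_q$ for the induced inner product on $T_qG$. The metric is right-invariant precisely when every $R_g$ is an isometry as well.

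The key algebraic identity is $\psi_g = L_g\circ R_{g^{-1}}$, coming from $gxg^{-1} = L_g(R_{g^{-1}}(x))$. Since $L_g$ is always an isometry, $R_{g^{-1}}$ is an isometry if and only if the inner automorphism $\psi_g$ is; and as $g$ ranges over $G$ so does $g^{-1}$, so right-invariance is equivalent to requiring $\psi_g$ to be an isometry for every $g\in G$. This converts the problem into understanding when the conjugation maps preserve the left-invariant metric.

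Next I would show that $\psi_g$ is an isometry if and only if $Ad(g)$ preserves the inner product on $\mathfrak{g}$. One direction is immediate: $\psi_g$ fixes the identity and $d(\psi_g)_e = Ad(g)$, so if $\psi_g$ is an isometry then $Ad(g)$ preserves $\langle\cdot,\cdot\rangle$. For the converse I would use that $\psi_g$ is a group homomorphism, which gives the intertwining relation $\psi_g\circ L_h = L_{\psi_g(h)}\circ\psi_g$. Differentiating this and combining with left-invariance, at an arbitrary point $p$ with tangent vectors written as $d(L_p)_e X$ for $X\in\mathfrak{g}$, a short computation yields $\big\langle d(\psi_g)_p\, d(L_p)_e X,\; d(\psi_g)_p\, d(L_p)_e Y\big\rangle_{\psi_g(p)} = \langle Ad(g)X,\, Ad(g)Y\rangle$. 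Hence $\psi_g$ is an isometry at every point exactly when $Ad(g)$ preserves the inner product.

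Assembling these equivalences proves the lemma: the metric is right-invariant if and only if every $\psi_g$ is an isometry if and only if $Ad(g)$ preserves $\langle\cdot,\cdot\rangle$ for all $g$, i.e. if and only if each $Ad(g)$ is an isometry of $\mathfrak{g}$. The main obstacle is the converse in the third paragraph, namely propagating the isometry property of $\psi_g$ from the single tangent space $T_eG$ to all of $G$; this is exactly where the homomorphism property of $\psi_g$ is essential, since it lets left-invariance carry the condition at the identity to every point.
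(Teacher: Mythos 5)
Your proof is correct. The paper itself gives no proof of this lemma --- it is quoted from Milnor --- so there is nothing internal to compare against, but your argument is the standard one: factor conjugation as $\psi_g = L_g\circ R_{g^{-1}}$ to reduce right-invariance to the $\psi_g$ being isometries, and then use the intertwining relation $\psi_g\circ L_p = L_{\psi_g(p)}\circ\psi_g$ together with left-invariance to propagate the condition from $T_eG$, where $d(\psi_g)_e = Ad(g)$, to every tangent space. Both directions and the key computation check out, and you correctly avoid any unneeded connectedness hypothesis.
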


\begin{Lema}\label{skebi}

Let $G$ a connected Lie group, a left invariant metric $\langle\cdot,\cdot\rangle$ is bi-invariant if and only if $ad(x)$ is skew-adjoint for all $x\in\mathfrak{g}$. That is, $$\langle ad(x)(y),(z)\rangle=-\langle y,ad(x)(z)\rangle,$$ for all $x,y,z\in\mathfrak{g}$.

\end{Lema}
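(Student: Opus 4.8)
The plan is to reduce bi-invariance to a condition on one-parameter subgroups via Lemma~\ref{adinbi} and then differentiate. By Lemma~\ref{adinbi}, the left-invariant metric $\langle,\rangle$ is bi-invariant if and only if $Ad(g)$ is an isometry of $(\mathfrak{g},\langle,\rangle)$ for every $g\in G$. The key bridge between this group-level condition and the desired algebra-level condition is the identity $Ad(\exp(tx))=e^{t\,ad(x)}$ for all $x\in\mathfrak{g}$ and $t\in\mathbb{R}$, which follows directly from the commutative diagram relating $ad$, $exp$, $Ad$, and the exponential map of $Aut(\mathfrak{g})$.

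For the forward implication, I would assume the metric is bi-invariant. Then for fixed $x$ the curve $t\mapsto Ad(\exp(tx))=e^{t\,ad(x)}$ consists of isometries, so the relation $\langle e^{t\,ad(x)}y,\,e^{t\,ad(x)}z\rangle=\langle y,z\rangle$ holds identically in $t$ for all $y,z\in\mathfrak{g}$. Differentiating at $t=0$ and using $\tfrac{d}{dt}\big|_{t=0}\,e^{t\,ad(x)}=ad(x)$ gives $\langle ad(x)y,z\rangle+\langle y,ad(x)z\rangle=0$, which is precisely skew-adjointness of $ad(x)$.

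For the converse, I would assume each $ad(x)$ is skew-adjoint, fix $x,y,z$, and set $f(t)=\langle e^{t\,ad(x)}y,\,e^{t\,ad(x)}z\rangle$. Since $ad(x)$ commutes with the flow $e^{t\,ad(x)}$, differentiation yields $f'(t)=\langle ad(x)e^{t\,ad(x)}y,\,e^{t\,ad(x)}z\rangle+\langle e^{t\,ad(x)}y,\,ad(x)e^{t\,ad(x)}z\rangle$, which vanishes by skew-adjointness. Hence $f$ is constant, equal to $f(0)=\langle y,z\rangle$, so each $e^{t\,ad(x)}=Ad(\exp(tx))$ is an isometry.

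To pass from one-parameter subgroups to an arbitrary $g\in G$, I would invoke connectedness: the image of $\exp$ contains a neighborhood of the identity, which generates the connected group $G$, so every $g$ can be written as a finite product $g=\exp(x_1)\cdots\exp(x_k)$. Because $Ad$ is a homomorphism and a composition of isometries is again an isometry, $Ad(g)$ is an isometry for every $g\in G$, and Lemma~\ref{adinbi} then completes the argument. I expect the only genuinely delicate point to be this final reduction, where the connectedness hypothesis on $G$ is essential; the differentiation steps themselves are routine.
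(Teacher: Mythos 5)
Your proof is correct and complete: the reduction via Lemma~\ref{adinbi}, the identity $Ad(\exp(tx))=e^{t\,ad(x)}$, the differentiation/constancy argument in both directions, and the use of connectedness to pass from exponentials to arbitrary $g$ are all sound. The paper itself states this lemma without proof, citing Milnor, and your argument is precisely the standard one given there, so there is nothing to compare beyond noting that you have supplied the omitted details correctly.
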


\begin{Rem}

Lemma \ref{skebi} allows us to characterize a bi-invariant metric for a Lie algebra: a metric $\langle\cdot,\cdot\rangle\in\widetilde{\mathfrak{M}}(\mathfrak{g})$ is bi-invariant if and only if $ad(x)$ is skew-adjoint for all $x\in\mathfrak{g}.$
    
\end{Rem}

For compact groups we have the following theorem that can be found in \cite{Alexandrino}.

\begin{Thm}\label{bicom}

Any compact Lie group G admits a bi-invariant metric.

\end{Thm}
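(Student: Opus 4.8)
The plan is to build a bi-invariant metric by averaging an arbitrary inner product over the group with respect to its Haar measure, using Lemma~\ref{adinbi} to translate between the bi-invariance of the metric and the $Ad$-invariance of the corresponding inner product. First I would fix any inner product $\langle,\rangle_0$ on the Lie algebra $\mathfrak{g}$; such a choice exists since $\mathfrak{g}$ is a finite-dimensional real vector space. By Lemma~\ref{adinbi}, a left-invariant metric on $G$ is bi-invariant precisely when every $Ad(g)$ is an isometry of the associated inner product, so the whole problem reduces to producing a single inner product on $\mathfrak{g}$ that is invariant under the adjoint action of $G$.

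The essential input, rather than a genuine obstacle, is the existence on a compact Lie group $G$ of a right-invariant Haar measure $dg$ of finite total volume, which I normalize so that $\int_G dg = 1$. Granting this, I would define
$$\langle X, Y\rangle := \int_G \langle Ad(g)X, Ad(g)Y\rangle_0\, dg, \qquad X,Y\in\mathfrak{g}.$$
I would then check that this is again an inner product: bilinearity and symmetry are inherited from $\langle,\rangle_0$ together with linearity of the integral, while positive definiteness follows because, for $X\neq 0$, each $Ad(g)$ is invertible, so the integrand $g\mapsto \langle Ad(g)X, Ad(g)X\rangle_0$ is continuous and strictly positive on the compact group and hence has strictly positive integral.

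Finally I would verify $Ad$-invariance directly. For $h\in G$,
$$\langle Ad(h)X, Ad(h)Y\rangle = \int_G \langle Ad(gh)X, Ad(gh)Y\rangle_0\, dg = \int_G \langle Ad(g')X, Ad(g')Y\rangle_0\, dg' = \langle X, Y\rangle,$$
where the first equality uses the homomorphism property $Ad(g)Ad(h)=Ad(gh)$ and the second uses right-invariance of $dg$ under the substitution $g'=gh$. Thus every $Ad(h)$ is an isometry of $\langle,\rangle$, and by Lemma~\ref{adinbi} the left-invariant metric on $G$ determined by $\langle,\rangle$ is bi-invariant. The only nontrivial ingredient is the existence and finiteness of Haar measure on a compact group; the remaining verifications are routine, so I do not expect a real obstacle here.
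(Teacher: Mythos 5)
Your averaging argument is correct and complete: the paper itself gives no proof of this theorem, citing \cite{Alexandrino}, and the proof in that reference is exactly the Haar-measure averaging you describe, combined with Lemma~\ref{adinbi} to pass from $Ad$-invariance of the inner product to bi-invariance of the metric. No gaps.
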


For a Lie algebra with a bi-invariant metric we have the following splitting theorem that can be found in \cite{Milnor}.
 
\begin{SThm}\label{split}

Let $\mathfrak{g}$ be a Lie algebra with a bi-invariant metric. Then we have $\mathfrak{g}=\mathfrak{a}_{1}\oplus\cdots\oplus\mathfrak{a}_{k}$ an orthogonal direct sum of simple ideals and commutative ideals without proper ideals, where the simply connected Lie group $\widetilde{G}$ associated with $\mathfrak{g}$ can be expressed as the product $A_{1}\times\cdots\times A_{k}$ of normal subgroups. Furthermore, for each $A_{i}$ we have two options:

\begin{enumerate}
\item If $\mathfrak{a}_{i}$ is commutative, then it has dimension $1$ and $A_{i}\cong\mathbb{R}.$
\item If $\mathfrak{a}_{i}$ is non-commutative, then the center of $\mathfrak{a}_{i}$ must be trivial, $A_{i}$ has strictly positive Ricci curvature and $A_{i}$ is compact.
\end{enumerate}
    
\end{SThm}

\begin{Rem}\label{lialgbi}

With the Splitting Theorem \ref{split}, we can say that any Lie algebra with a bi-invariant metric decomposes as $\mathfrak{g}=\mathfrak{s}\oplus Z(\mathfrak{g})$, where $\mathfrak{s}$ is a semi-simple Lie algebra.
    
\end{Rem}

For a connected Lie group that admits a bi-invariant metric, Milnor \cite{Milnor} gives the following description.

\begin{Lema}\label{bi5}

A connected Lie group admits a bi-invariant metric if and only if it is isomorphic as a group to the cartesian product $K\times\mathbb{R}^{l}$, where $K$ is compact.
    
\end{Lema}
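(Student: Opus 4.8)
The plan is to prove the two implications separately, with the ``if'' direction being routine and the ``only if'' direction carrying the real content. First I would dispose of the easy direction: suppose $G\cong K\times\mathbb{R}^{l}$ with $K$ compact. By Theorem~\ref{bicom}, $K$ admits a bi-invariant metric, and $\mathbb{R}^{l}$ carries the Euclidean metric, which is bi-invariant because $\mathbb{R}^{l}$ is abelian (left and right translations coincide and both preserve the Euclidean inner product). The product metric is then bi-invariant on $G$, since any translation of $G$ is a product of translations of the two factors, each an isometry of the corresponding factor metric.

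For the converse, I would start from the Lie algebra. By Lemma~\ref{skebi} and the Remark following it, the bi-invariant metric on $G$ corresponds to a bi-invariant inner product on $\mathfrak{g}$, so the Splitting Theorem~\ref{split} applies. It gives $\mathfrak{g}=\mathfrak{a}_{1}\oplus\cdots\oplus\mathfrak{a}_{k}$ and, for the simply connected group $\widetilde{G}$ with Lie algebra $\mathfrak{g}$, a product decomposition $\widetilde{G}=A_{1}\times\cdots\times A_{k}$ in which each $A_{i}$ is either $\mathbb{R}$ (an abelian factor) or, by option~(2), a compact simple group with finite center. Grouping the compact factors into a single compact semisimple group $S$ and the lines into $\mathbb{R}^{l}$, I obtain $\widetilde{G}=S\times\mathbb{R}^{l}$ with $Z(S)$ finite and $Z(\widetilde{G})=Z(S)\times\mathbb{R}^{l}$. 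Writing $G=\widetilde{G}/\Gamma$ with $\Gamma=\pi_{1}(G)$ a discrete subgroup, and noting that a discrete normal subgroup of a connected group is central, I get $\Gamma\subseteq Z(\widetilde{G})=Z(S)\times\mathbb{R}^{l}$.

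The heart of the argument is then to show this quotient still splits. Let $p\colon Z(S)\times\mathbb{R}^{l}\to\mathbb{R}^{l}$ be the projection. Because $Z(S)$ is finite, $p(\Gamma)$ is discrete in $\mathbb{R}^{l}$ [if $(f_{n},v_{n})\in\Gamma$ with $v_{n}\to 0$, a subsequence has $f_{n}=f$ constant, and then $(f_{n},v_{n})(f_{m},v_{m})^{-1}=(e,v_{n}-v_{m})\in\Gamma$ accumulates at the identity, forcing $v_{n}-v_{m}=0$]. Hence $p(\Gamma)=L$ is a lattice spanning a subspace $V\subseteq\mathbb{R}^{l}$ of some dimension $r$. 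Fixing a complement $\mathbb{R}^{l}=V\oplus W$ with $\dim W=l-r$, and observing that the $\mathbb{R}^{l}$-component of every element of $\Gamma$ lies in $V$, I have $\Gamma\subseteq S\times V$, so the $W$-factor splits off:
\[
G=(S\times V\times W)/\Gamma=\big((S\times V)/\Gamma\big)\times W\cong K\times\mathbb{R}^{\,l-r},\qquad K:=(S\times V)/\Gamma.
\]
Finally $K$ is compact: taking a compact fundamental domain $\overline{D}\subset V$ for $L$, every $(s,v)\in S\times V$ is $\Gamma$-equivalent to a point of the compact set $S\times\overline{D}$ (subtract the element of $\Gamma$ projecting onto the $L$-part of $v$), so $K$ is the continuous image of $S\times\overline{D}$ and hence compact.

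I expect the last step to be the main obstacle. The Splitting Theorem only controls the \emph{simply connected} cover, so everything reduces to understanding how the central lattice $\Gamma$ sits inside $Z(S)\times\mathbb{R}^{l}$; the danger is that $\Gamma$ might entangle the finite center $Z(S)$ with the Euclidean directions in a way that destroys the product structure. The finiteness of $Z(S)$ (itself a consequence of $S$ being compact semisimple) is precisely what forces $p(\Gamma)$ to be discrete, and this discreteness is what lets me peel off the complementary $\mathbb{R}^{l-r}$ and recognize the remaining quotient as a compact group.
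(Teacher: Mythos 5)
Your proof is correct. The paper does not actually prove this lemma---it is quoted from Milnor without argument---and your route (apply the Splitting Theorem to write the universal cover as $S\times\mathbb{R}^{l}$ with $S$ compact semisimple, observe the covering kernel $\Gamma$ is central, use finiteness of $Z(S)$ to see that the projection of $\Gamma$ to $\mathbb{R}^{l}$ is a lattice, and then split off the complementary Euclidean directions) is essentially the standard proof of Milnor's Lemma 7.5, with all the delicate points (discreteness of $p(\Gamma)$, normality of $\Gamma$ in $S\times V$, compactness of the quotient) handled correctly.
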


\subsection{Curvature of Bi-invariant metrics}

Given a left-invariant metric on $G$ we have its associated Levi-Civita connection. We consider the curvature tensor $R$ of the left invariant metric, and define the following curvature operator: for $x,y\in\mathfrak{g}$ we define $$\kappa(x,y)=\langle R_{xy}(x),y \rangle.$$ For a bi-invariant metric, Milnor \cite{Milnor} shows that this curvature operator is non-negative.

\begin{Thm}\label{curvsecbicor}
    
In a Lie group $G$ with a bi-invariant metric, for $x,y\in\mathfrak{g}$ then $\kappa(x,y)=\frac{1}{4}||[x,y]||^{2}\geq0.$

\end{Thm}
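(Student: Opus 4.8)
The plan is to reduce everything to a computation on left-invariant vector fields, where bi-invariance (via Lemma~\ref{skebi}) collapses the Koszul formula to a purely algebraic expression. First I would compute the Levi-Civita connection. For left-invariant vector fields $x,y,z\in\mathfrak{g}$ the functions $\langle x,y\rangle$ are constant on $G$, so the three differentiation terms in the Koszul formula vanish and one is left with
$$2\langle \nabla_x y, z\rangle = \langle [x,y], z\rangle - \langle [x,z], y\rangle - \langle [y,z], x\rangle.$$
Applying the skew-adjointness of $ad(x)$ from Lemma~\ref{skebi} to the last two terms (rewriting $\langle [x,z],y\rangle = -\langle[x,y],z\rangle$ and $\langle[y,z],x\rangle = \langle[x,y],z\rangle$) gives $2\langle\nabla_x y,z\rangle = \langle[x,y],z\rangle$ for every $z$, hence $\nabla_x y = \frac{1}{2}[x,y]$.

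Next I would insert this formula into the definition of the curvature tensor. Writing $R_{xy} = \nabla_x\nabla_y - \nabla_y\nabla_x - \nabla_{[x,y]}$ and using $\nabla_x y = \frac{1}{2}[x,y]$ repeatedly yields
$$R_{xy}(z) = \frac{1}{4}[x,[y,z]] - \frac{1}{4}[y,[x,z]] - \frac{1}{2}[[x,y],z].$$
The Jacobi identity collapses the first two brackets, $[x,[y,z]]-[y,[x,z]] = [[x,y],z]$, so that $R_{xy}(z) = \frac{1}{4}[[x,y],z]$ (with the sign convention that makes the sectional curvature nonnegative). Finally, evaluating $\kappa(x,y)=\langle R_{xy}(x),y\rangle = \frac{1}{4}\langle[[x,y],x],y\rangle$ and applying Lemma~\ref{skebi} once more — rewriting $[[x,y],x]=-ad(x)([x,y])$ so that $\langle -ad(x)([x,y]),y\rangle = \langle [x,y], ad(x)(y)\rangle = \langle[x,y],[x,y]\rangle$ — produces $\kappa(x,y)=\frac{1}{4}\|[x,y]\|^{2}\geq0$.

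The computations are all elementary bilinear-algebra manipulations, so the main point of care is bookkeeping: one must apply the skew-adjointness identity with the correct argument in each slot, and fix a single sign convention for the curvature tensor at the outset, since the opposite convention would flip the sign and appear to contradict nonnegativity. I expect the only genuine subtlety to be the passage through the Jacobi identity, where tracking the three cyclic terms correctly is essential; once the connection formula $\nabla_x y = \frac{1}{2}[x,y]$ is established, the rest is forced.
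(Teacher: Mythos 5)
Your argument is correct and is essentially the standard one: the paper itself does not prove this theorem (it simply cites Milnor), and what you have written out is precisely Milnor's proof — Koszul formula plus skew-adjointness gives $\nabla_x y=\tfrac12[x,y]$, the Jacobi identity collapses the curvature tensor to $\pm\tfrac14[[x,y],z]$, and one more application of skew-adjointness yields $\tfrac14\|[x,y]\|^2$. The one place where your bookkeeping is not quite self-consistent is the sign convention: with the convention you actually wrote down, $R_{xy}=\nabla_x\nabla_y-\nabla_y\nabla_x-\nabla_{[x,y]}$, the Jacobi collapse gives $\tfrac14[[x,y],z]-\tfrac12[[x,y],z]=-\tfrac14[[x,y],z]$, and then $\langle R_{xy}(x),y\rangle=-\tfrac14\|[x,y]\|^2$, the wrong sign. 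You flag this parenthetically, but to match the paper's definition $\kappa(x,y)=\langle R_{xy}(x),y\rangle$ you should commit from the start to Milnor's convention $R_{xy}=\nabla_{[x,y]}-\nabla_x\nabla_y+\nabla_y\nabla_x$ (equivalently, keep your convention but evaluate $\langle R(x,y)y,x\rangle$). With that single fix the proof is complete.
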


 All curvatures we consider are determined by this operator $\kappa.$

\section{Moduli Space of Bi-Invariant Metrics}\label{Section 3}

In this section, we examine the moduli space of bi-invariant metrics of a Lie
group $G$. An advantage of this approach is the fact that to find left invariant metrics
with non-negative sectional and scalar curvatures on a Lie group $G$, under certain conditions (see \cite[p.297]{Milnor}), it will be sufficient to find a subgroup with
a bi-invariant metric. In this way, knowing how many non isomorphic bi-invariant
metrics the subgroup has, we have a rough measure of how many, non isomorphic left
invariant metrics with non-negative sectional and scalar curvatures the group $G$ possesses.

\subsection{Isometry Classes of Bi-Invariant Metrics}

We denote the collection of inner products on a Lie algebra $\mathfrak{g}$ by $\widetilde{\mathfrak{M}}(\mathfrak{g})$. Identifying an inner product on $\widetilde{\mathfrak{M}}(\mathfrak{g})$ with a matrix, we endow $\widetilde{\mathfrak{M}}(\mathfrak{g})$ with the relative topology as a subspace of a matrix space.

\begin{Def}

For a Lie algebra $\mathfrak{g}$, we define the set of left invariant metrics that are bi-invariant in $\mathfrak{g}$, as follows

$$\widetilde{\mathfrak{BI}}(\mathfrak{g})=\{\langle\cdot ,\cdot \rangle\in\widetilde{\mathfrak{M}}(\mathfrak{g})|\langle \cdot,\cdot \rangle \ is \ a \ bi-invariant \ metric\}.$$

\end{Def}

We want to study the \textit{moduli space of bi-invariant metrics} $\mathfrak{BI}(\mathfrak{g})$ consisting of the isometry classes of $\widetilde{\mathfrak{BI}}(\mathfrak{g})$ and the space $\mathfrak{EBI}(\mathfrak{g})$ of conformally equivalent classes  $\widetilde{\mathfrak{BI}}(\mathfrak{g})$. To do this we introduce the following definition.

\begin{Def}

Let $(\mathfrak{g}_{1},\langle\cdot,\cdot\rangle_{1})$ and $(\mathfrak{g}_{2},\langle\cdot,\cdot\rangle_{2})$ Lie algebras with inner products. We say that:

\begin{flushleft}

\begin{enumerate}
    \item They are isometric if there exists a Lie algebra isomorphism  $\phi:\mathfrak{g}_{1}\rightarrow\mathfrak{g}_{2}$ such that $$\phi^{*}(\langle\cdot,\cdot\rangle_{2})=\langle\cdot,\cdot\rangle_{1}. $$

    \item They are conformally equivalent, if there exists a Lie algebra isomorphism   $\phi:\mathfrak{g}_{1}\rightarrow\mathfrak{g}_{2}$ and a real number $\lambda>0$ such that $$\lambda\phi^{*}(\langle\cdot,\cdot\rangle_{2})=\langle\cdot,\cdot\rangle_{1}. $$

\end{enumerate}
\end{flushleft}
    
\end{Def}

It is clear that both are equivalence relations on $\widetilde{\mathfrak{BI}}(\mathfrak{g})$. Now we are going to prove that an equivalence class of a bi-invariant metric contains only bi-invariant metrics.

\begin{Prop}

Let $\mathfrak{g}$ a Lie algebra and $\langle\cdot,\cdot\rangle_{0}\in\widetilde{\mathfrak{BI}}(\mathfrak{g})$, then for any Lie algebra isomorphism $\phi:\mathfrak{g}\rightarrow\mathfrak{g}$ and $\lambda>0$ we have that $\phi^{*}(\langle\cdot,\cdot\rangle_{0})$ 
and $\lambda\phi^{*}(\langle\cdot,\cdot\rangle_{0})$ are bi-invariant metrics.

\end{Prop}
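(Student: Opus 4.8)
The plan is to reduce everything to the skew-adjointness characterization of bi-invariant metrics recorded in the Remark following Lemma~\ref{skebi}: a metric in $\widetilde{\mathfrak{M}}(\mathfrak{g})$ is bi-invariant precisely when every $ad(x)$ is skew-adjoint with respect to it. Writing $\langle y,z\rangle_{1}:=\phi^{*}(\langle,\rangle_{0})(y,z)=\langle\phi(y),\phi(z)\rangle_{0}$, I would first confirm that $\langle,\rangle_{1}$ is a genuine inner product. This is immediate since $\phi$ is a linear isomorphism: symmetry is inherited from $\langle,\rangle_{0}$, and positive-definiteness holds because $\phi(y)=0$ forces $y=0$, so $\langle y,y\rangle_{1}=\langle\phi(y),\phi(y)\rangle_{0}>0$ whenever $y\neq 0$.

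The heart of the argument is the intertwining identity $\phi([x,y])=[\phi(x),\phi(y)]$, valid because $\phi$ is a Lie algebra isomorphism; equivalently, $\phi\circ ad(x)=ad(\phi(x))\circ\phi$. Using this I would compute, for arbitrary $x,y,z\in\mathfrak{g}$,
$$\langle ad(x)(y),z\rangle_{1}=\langle\phi([x,y]),\phi(z)\rangle_{0}=\langle ad(\phi(x))(\phi(y)),\phi(z)\rangle_{0}.$$
Now I apply skew-adjointness of $ad(\phi(x))$ with respect to $\langle,\rangle_{0}$, which holds since $\langle,\rangle_{0}$ is bi-invariant, and run the same identity backwards:
$$\langle ad(\phi(x))(\phi(y)),\phi(z)\rangle_{0}=-\langle\phi(y),ad(\phi(x))(\phi(z))\rangle_{0}=-\langle\phi(y),\phi([x,z])\rangle_{0}=-\langle y,ad(x)(z)\rangle_{1}.$$
Thus $ad(x)$ is skew-adjoint with respect to $\langle,\rangle_{1}$ for every $x$, and the Remark gives that $\phi^{*}(\langle,\rangle_{0})$ is bi-invariant.

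For the scalar multiple, both required properties survive trivially. The form $\lambda\langle,\rangle_{1}$ is still symmetric and positive-definite because $\lambda>0$, and scaling the identity $\langle ad(x)(y),z\rangle_{1}=-\langle y,ad(x)(z)\rangle_{1}$ by $\lambda$ preserves the skew-adjointness relation verbatim. Hence $\lambda\phi^{*}(\langle,\rangle_{0})$ is bi-invariant as well.

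I expect no serious obstacle here; the statement is a formal consequence of the skew-adjointness criterion together with the fact that an isomorphism intertwines the adjoint representations. The only point requiring a little care is verifying that $\phi^{*}(\langle,\rangle_{0})$ remains an honest inner product, and not merely a symmetric bilinear form satisfying the skew-adjointness condition. This is exactly where the bijectivity of $\phi$ and the positivity of $\lambda$ are genuinely used.
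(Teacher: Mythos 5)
Your proof is correct and follows essentially the same route as the paper: both reduce the claim to the skew-adjointness criterion of Lemma~\ref{skebi} and use the intertwining identity $\phi([x,y])=[\phi(x),\phi(y)]$ to transport skew-adjointness from $\langle,\rangle_{0}$ to $\phi^{*}(\langle,\rangle_{0})$. Your added checks that the pullback is still an inner product and that the $\lambda>0$ scaling preserves skew-adjointness are points the paper leaves implicit, but they do not change the argument.
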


\begin{proof}

Using \ref{skebi} we know that $ad(x)$ is skew-adjoint for all $x\in\mathfrak{g}$. That is to say $$\langle [x,y],z\rangle_{0}=-\langle y,[x,z] \rangle_{0}.$$

Now for $\phi\in Aut(\mathfrak{g})$ we have $$\langle \phi([x,y]),\phi(z)\rangle_{0}=\langle[\phi(x),\phi(y)] ,\phi(z) \rangle_{0}=-\langle \phi(y),[\phi(x),\phi(z)] \rangle_{0}. $$

This implies that $ad$ is skew-adjoint with respect to $\phi^{*}(\langle\cdot,\cdot\rangle_{0})$ and by \ref{skebi} we have that $\phi^{*}(\langle\cdot,\cdot\rangle_{0})$ 
and $\lambda\phi^{*}(\langle\cdot,\cdot\rangle_{0})$ are bi-invariant metrics.

\end{proof}

Now we introduce $\mathfrak{BI}(\mathfrak{g})$ and $\mathfrak{EBI}(\mathfrak{g})$.

\begin{Def}

For a Lie algebra $\mathfrak{g}$ with a bi-invariant metric we define:

\begin{enumerate}
\item $\mathfrak{BI}(\mathfrak{g})$ as the space of isometry classes of  $\widetilde{\mathfrak{BI}}(\mathfrak{g})$ 

\item $\mathfrak{EBI}(\mathfrak{g})$ as the space of conformally equivalent classes of $\widetilde{\mathfrak{BI}}(\mathfrak{g})$
\end{enumerate}
    
\end{Def}

We conclude from the definition of both equivalence relations the following corollary.

\begin{Coro}

For a Lie algebra $\mathfrak{g}$ with a bi-invariant metric, we have:

\begin{enumerate}

\item $\mathfrak{BI}(\mathfrak{g})$ is the orbit space $\widetilde{\mathfrak{BI}}(\mathfrak{g})/Aut(\mathfrak{g})$ under the left action  $$(\phi,\langle\cdot ,\cdot \rangle)\mapsto\phi^{*}(\langle\cdot,\cdot\rangle).$$ 

\item $\mathfrak{EBI}(\mathfrak{g})$ is the orbit space $\widetilde{\mathfrak{BI}}(\mathfrak{g})/\mathbb{R}^{\times}Aut(\mathfrak{g})$ where $$\mathbb{R}^{\times}=\{\psi:\mathfrak{g}\rightarrow\mathfrak{g}|\psi(x)=\lambda x,\lambda>0\},$$
and $\mathbb{R}^{\times}Aut(\mathfrak{g})$ acts on $\widetilde{\mathfrak{BI}}(\mathfrak{g})$ via $(\lambda\phi,\langle\cdot,\cdot\rangle)\mapsto(\lambda\phi^{*}(\langle\cdot,\cdot\rangle))$.

\end{enumerate}

\end{Coro}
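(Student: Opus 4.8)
The plan is to read both identifications directly off the definitions of the two equivalence relations, using the preceding Proposition only to ensure that the proposed actions preserve $\widetilde{\mathfrak{BI}}(\mathfrak{g})$. I would treat part (1) first and then obtain part (2) by the same argument applied to the enlarged group.

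For part (1) I would first verify that $(\phi,\langle,\rangle)\mapsto\phi^{*}(\langle,\rangle)$ is a genuine action of $Aut(\mathfrak{g})$ on $\widetilde{\mathfrak{BI}}(\mathfrak{g})$. That $\phi^{*}(\langle,\rangle)$ is again bi-invariant is exactly the preceding Proposition, so the action does not leave $\widetilde{\mathfrak{BI}}(\mathfrak{g})$; the identity axiom is immediate, and a one-line computation gives $\phi^{*}(\psi^{*}(\langle,\rangle))=(\psi\circ\phi)^{*}(\langle,\rangle)$. Here lies the only mild convention point: pullback reverses composition, so strictly speaking $\phi\mapsto\phi^{*}$ is an anti-homomorphism, and to present it as an honest \emph{left} action one lets $\phi$ act by $(\phi^{-1})^{*}$, which changes nothing at the level of orbits since $\phi$ ranges over $Aut(\mathfrak{g})$ exactly when $\phi^{-1}$ does. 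Once the action is in place the identification is a tautology: two metrics $\langle,\rangle_{1},\langle,\rangle_{2}$ are isometric in the sense of the Definition precisely when some $\phi\in Aut(\mathfrak{g})$ satisfies $\phi^{*}(\langle,\rangle_{2})=\langle,\rangle_{1}$, i.e. precisely when they lie in a common orbit. Hence the isometry classes are the orbits, and since $\mathfrak{BI}(\mathfrak{g})$ carries the quotient topology of the same relation on $\widetilde{\mathfrak{BI}}(\mathfrak{g})$, the equality $\mathfrak{BI}(\mathfrak{g})=\widetilde{\mathfrak{BI}}(\mathfrak{g})/Aut(\mathfrak{g})$ holds as topological spaces.

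For part (2) I would run the identical argument with $\mathbb{R}^{\times}Aut(\mathfrak{g})$ acting by $(\lambda\phi,\langle,\rangle)\mapsto\lambda\,\phi^{*}(\langle,\rangle)$; well-definedness is again supplied by the preceding Proposition, whose second conclusion already records that $\lambda\,\phi^{*}(\langle,\rangle)$ is bi-invariant, and the resulting orbit relation is by construction the conformal equivalence of the Definition. The step I expect to be the genuine obstacle is checking that this formula really defines an action of the group $\mathbb{R}^{\times}Aut(\mathfrak{g})$, since a single linear map may a priori admit several factorizations $\lambda\phi$ and one must see that the value $\lambda\,\phi^{*}(\langle,\rangle)$ is independent of the chosen factorization. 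The clean resolution is to let the direct product $\mathbb{R}^{\times}\times Aut(\mathfrak{g})$ act via $(\lambda,\phi)\cdot\langle,\rangle=\lambda\,\phi^{*}(\langle,\rangle)$, for which no factorization issue arises, and to note separately that whenever $\mathfrak{g}$ is non-abelian the only scalar that is itself an automorphism is $1$, so the factorization is unique and the two descriptions coincide; in the abelian case both quotients collapse to a point and the statement is trivial.
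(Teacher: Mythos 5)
Your argument is correct and is exactly the route the paper intends: the paper offers no proof at all, asserting that the corollary follows directly from the definitions of the two equivalence relations, and your writeup is the careful expansion of that claim (including two legitimate points the paper glosses over, namely the left-versus-right action convention for pullbacks and the well-definedness of the $\mathbb{R}^{\times}Aut(\mathfrak{g})$ action under non-unique factorizations $\lambda\phi$).
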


\begin{Rem}\label{acom}

It is easy to verify that $\lambda\phi^{*}(\langle\cdot,\cdot\rangle)=\phi^{*}(\lambda\langle\cdot,\cdot\rangle)$ wich is a crucial fact for the description of $\mathfrak{EBI}$.

\end{Rem}

The simplest case is when we have an abelian Lie algebra.

\begin{Coro}\label{biab}

Let $\mathfrak{g}$ an abelian Lie algebra, then we have that $\mathfrak{BI}(\mathfrak{g})=\{\langle\cdot,\cdot\rangle_{0}\}$  and $\mathfrak{EBI}(\mathfrak{g})=\{\langle\cdot,\cdot\rangle_{0}\}.$
    
\end{Coro}

\begin{proof}

In an abelian Lie algebra any metric is bi-invariant, in addition we have $GL(\mathfrak{g})=Aut(\mathfrak{g})$. This implies that two metrics in $\mathfrak{g}$ are always related under an automorphism, thus $\mathfrak{BI}(\mathfrak{g})=\{\langle\cdot,\cdot\rangle_{0}\}$  and $\mathfrak{EBI}(\mathfrak{g})=\{\langle\cdot,\cdot\rangle_{0}\}.$

\end{proof}

\subsection{Simplifying to the compact and semi-simple case}

Using the Splitting Theorem \ref{split} and Lemma \ref{semis} we can describe the group $Aut(\mathfrak{g})$ for a Lie algebra with a bi-invariant metric. From now on, every time we write $\mathfrak{g}=\mathfrak{s}\oplus Z(\mathfrak{g})$ we refer to a Lie algebra that admits a bi-invariant metric as indicated in Remark \ref{lialgbi}. Before starting the study of $Aut(\mathfrak{g})$, we are going to prove the following lemma.

\begin{Lema}\label{semis}

For a semi-simple Lie algebra $\mathfrak{g}$, then $[\mathfrak{g},\mathfrak{g}]=\mathfrak{g}.$
    
\end{Lema}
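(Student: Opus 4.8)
The plan is to reduce the statement to the case of a simple Lie algebra by invoking the decomposition of a semisimple algebra into simple ideals, and then to exploit the defining property of simplicity together with the general fact that the derived algebra is always an ideal. This avoids any delicate curvature or metric considerations and keeps the argument purely algebraic.

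First I would recall that every semisimple Lie algebra $\mathfrak{g}$ splits as a direct sum of simple ideals $\mathfrak{g}=\mathfrak{g}_1\oplus\cdots\oplus\mathfrak{g}_k$, with $[\mathfrak{g}_i,\mathfrak{g}_j]=0$ whenever $i\neq j$; the cross-brackets vanish because each factor is an ideal, so $[\mathfrak{g}_i,\mathfrak{g}_j]\subseteq\mathfrak{g}_i\cap\mathfrak{g}_j=0$. Consequently $[\mathfrak{g},\mathfrak{g}]=\bigoplus_{i=1}^k[\mathfrak{g}_i,\mathfrak{g}_i]$, and it suffices to establish $[\mathfrak{g}_i,\mathfrak{g}_i]=\mathfrak{g}_i$ for each simple factor.

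Next I would observe that the derived subalgebra $[\mathfrak{g}_i,\mathfrak{g}_i]$ is always an ideal of $\mathfrak{g}_i$: for $w=\sum_j[a_j,b_j]$ and any $x\in\mathfrak{g}_i$, the Jacobi identity places $[x,w]$ back in $[\mathfrak{g}_i,\mathfrak{g}_i]$. Since $\mathfrak{g}_i$ is simple, its only ideals are $0$ and $\mathfrak{g}_i$. Because a simple Lie algebra is by definition non-abelian, $[\mathfrak{g}_i,\mathfrak{g}_i]\neq 0$, which forces $[\mathfrak{g}_i,\mathfrak{g}_i]=\mathfrak{g}_i$. Summing over $i$ then yields $[\mathfrak{g},\mathfrak{g}]=\mathfrak{g}$.

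The main obstacle is really the structural input that a semisimple algebra decomposes into simple ideals with vanishing cross-brackets; this is the single nontrivial fact the argument rests on. An alternative that bypasses the decomposition, self-contained once one knows the Killing form $B$ is non-degenerate, is to set $\mathfrak{h}=[\mathfrak{g},\mathfrak{g}]$ and consider its Killing-orthogonal complement $\mathfrak{h}^{\perp}$. For $x\in\mathfrak{h}^{\perp}$ and $y,z\in\mathfrak{g}$, invariance of the Killing form gives $B([x,y],z)=B(x,[y,z])=0$, since $[y,z]\in\mathfrak{h}$; non-degeneracy then forces $[x,y]=0$ for every $y$, so $x\in Z(\mathfrak{g})=0$. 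Hence $\mathfrak{h}^{\perp}=0$, and by non-degeneracy of $B$ we conclude $\mathfrak{h}=\mathfrak{g}$. Either route completes the proof, and I would favor the first as the more elementary one.
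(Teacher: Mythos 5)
Your preferred first argument is correct and is essentially the paper's proof: decompose the semisimple algebra into simple ideals, observe that $[\mathfrak{a}_i,\mathfrak{a}_i]$ is a nonzero ideal of the simple, non-abelian factor $\mathfrak{a}_i$ and hence equals $\mathfrak{a}_i$, then sum over the factors (you actually justify the step $[\mathfrak{a}_i,\mathfrak{a}_i]=\mathfrak{a}_i$ more carefully than the paper does). The Killing-form alternative you sketch is also valid but is not the route the paper takes.
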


\begin{proof}

It is clear that $[\mathfrak{g},\mathfrak{g}]\subset\mathfrak{g}$. Since $\mathfrak{g}$ is semi-simple then $\mathfrak{g}=\mathfrak{a}_{1}\oplus\cdot\cdot\cdot\oplus\mathfrak{a}_{k}$ where each $\mathfrak{a}_{i}$ is simple, so $\mathfrak{a}_{i}$ is non abelian and contains no nonzero proper ideals. This implies that $[\mathfrak{a}_{i},\mathfrak{a}_{i}]=\mathfrak{a}_{i}$. Therefore given $x\in\mathfrak{g}$ we have $x=x_{1}+\cdot\cdot\cdot+x_{k}$ with $x_{i}\in\mathfrak{a}_{i}=[\mathfrak{a}_{i},\mathfrak{a}_{i}]$. Thus $x$ is is a linear combination of
elements of $[\mathfrak{g},\mathfrak{g}]$ and it follows that $[\mathfrak{g},\mathfrak{g}]=\mathfrak{g}$.   
    
\end{proof}

\begin{Prop}

For a Lie algebra with a bi-invariant metric $\mathfrak{g}\cong\mathfrak{s}\oplus Z(\mathfrak{g})$ we have $Aut(\mathfrak{g})\cong Aut(\mathfrak{s})\oplus Aut(Z(\mathfrak{g})).$

\end{Prop}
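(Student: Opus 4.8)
The plan is to realize both summands $\mathfrak{s}$ and $Z(\mathfrak{g})$ as \emph{characteristic} ideals of $\mathfrak{g}$, so that every automorphism is forced to respect the decomposition. First I would show that $\mathfrak{s}$ coincides with the derived subalgebra $[\mathfrak{g},\mathfrak{g}]$: since $Z(\mathfrak{g})$ is central, every cross-bracket vanishes and $[\mathfrak{g},\mathfrak{g}]=[\mathfrak{s},\mathfrak{s}]$, while Lemma~\ref{semis} gives $[\mathfrak{s},\mathfrak{s}]=\mathfrak{s}$; hence $\mathfrak{s}=[\mathfrak{g},\mathfrak{g}]$. The subalgebra $Z(\mathfrak{g})$ is by definition the center. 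Both of these are invariant under any $\phi\in Aut(\mathfrak{g})$: indeed $\phi([\mathfrak{g},\mathfrak{g}])=[\phi(\mathfrak{g}),\phi(\mathfrak{g})]=[\mathfrak{g},\mathfrak{g}]$, and if $z\in Z(\mathfrak{g})$ then $[\phi(z),\phi(x)]=\phi([z,x])=0$ for all $x$, so by surjectivity $\phi(z)\in Z(\mathfrak{g})$. Applying the same reasoning to $\phi^{-1}$ yields the equalities $\phi(\mathfrak{s})=\mathfrak{s}$ and $\phi(Z(\mathfrak{g}))=Z(\mathfrak{g})$.

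With this in hand, I would define $\Phi:Aut(\mathfrak{g})\to Aut(\mathfrak{s})\oplus Aut(Z(\mathfrak{g}))$ by restriction, $\Phi(\phi)=(\phi|_{\mathfrak{s}},\phi|_{Z(\mathfrak{g})})$. Since $\phi$ preserves each summand and respects the bracket, each restriction is a genuine automorphism of the corresponding subalgebra, and $\Phi$ is visibly a group homomorphism. Injectivity is immediate: because $\mathfrak{g}=\mathfrak{s}\oplus Z(\mathfrak{g})$ as vector spaces, $\phi$ is completely determined by its two restrictions, so $\Phi(\phi)=\mathrm{id}$ forces $\phi=\mathrm{id}$.

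For surjectivity I would take an arbitrary pair $(\alpha,\beta)\in Aut(\mathfrak{s})\oplus Aut(Z(\mathfrak{g}))$ and form $\phi:=\alpha\oplus\beta$ on $\mathfrak{g}=\mathfrak{s}\oplus Z(\mathfrak{g})$. This is plainly a linear isomorphism, and it is a Lie algebra homomorphism exactly because $Z(\mathfrak{g})$ is central: writing $x=x_{\mathfrak{s}}+x_{Z}$ and $y=y_{\mathfrak{s}}+y_{Z}$, all cross-brackets and the bracket on $Z(\mathfrak{g})$ vanish, so $[x,y]=[x_{\mathfrak{s}},y_{\mathfrak{s}}]\in\mathfrak{s}$, whence $\phi([x,y])=\alpha([x_{\mathfrak{s}},y_{\mathfrak{s}}])=[\alpha(x_{\mathfrak{s}}),\alpha(y_{\mathfrak{s}})]=[\phi(x),\phi(y)]$. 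Thus $\phi\in Aut(\mathfrak{g})$ with $\Phi(\phi)=(\alpha,\beta)$, and $\Phi$ is an isomorphism.

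These computations are all routine; the only real content — and hence the step deserving the most care — is the canonical identification $\mathfrak{s}=[\mathfrak{g},\mathfrak{g}]$, which is what guarantees that no automorphism can mix the semisimple and abelian parts. Everything else follows formally from the centrality of $Z(\mathfrak{g})$, which annihilates every cross term in the bracket.
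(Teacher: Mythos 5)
Your proof is correct and follows essentially the same route as the paper: both arguments hinge on Lemma~\ref{semis} to show that $\mathfrak{s}=[\mathfrak{g},\mathfrak{g}]$ is preserved by every automorphism and on centrality to show the same for $Z(\mathfrak{g})$. You go slightly further by explicitly verifying that the restriction map is an isomorphism (injectivity and surjectivity), steps the paper leaves implicit, but this is a matter of completeness rather than a different approach.
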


\begin{proof}

For $\phi\in Aut(\mathfrak{g})$, we will see that $\phi(\mathfrak{s})\subset\mathfrak{s}$ and $\phi(Z(\mathfrak{g}))\subset Z(\mathfrak{g})$. Take $x\in\mathfrak{s}$, so by Lemma \ref{semis} we know that $x=\sum\alpha_{i}[u_{i},v_{i}]$ for a finite number of $v_{i},u_{i}\in\mathfrak{s}.$ Then we have $\phi(x)=\phi(\sum\alpha_{i}[u_{i},v_{i}])=\sum\alpha_{i}[\phi(v_{i}),\phi(u_{i})]\in\mathfrak{s}$, since $Z(\mathfrak{g})$ is abelian and thus the Lie bracket in $\mathfrak{s}\oplus Z(\mathfrak{g})$ has
nontrivial components in $\mathfrak{s}$. Finally if $y\in Z(\mathfrak{g})$ and $w\in\mathfrak{g}$ we have $[\phi(y),w]=\phi([y,\phi^{-1}(w)])=0$ therefore $\phi(y)\in Z(\mathfrak{g})$ as claimed.
    
\end{proof}

\begin{Lema}\label{autbi}

For the Lie algebra $\mathfrak{g}\cong\mathfrak{s}\oplus Z(\mathfrak{g})$, the spaces $\mathfrak{BI}(\mathfrak{g})$ and $\mathfrak{EBI}(\mathfrak{g})$ are homeomorphic to $\mathfrak{BI}(\mathfrak{s})$ and $\mathfrak{EBI}(\mathfrak{s})$. 
    
\end{Lema}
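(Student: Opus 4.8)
The plan is to reduce the entire computation to the semisimple factor, by showing that every bi-invariant metric on $\mathfrak{g}$ forces $\mathfrak{s}$ and $Z(\mathfrak{g})$ to be orthogonal, so that $\widetilde{\mathfrak{BI}}(\mathfrak{g})$ splits as a product on which $Aut(\mathfrak{g})$ acts blockwise, and then pushing this splitting through the quotient.

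First I would prove the orthogonality $\mathfrak{s}\perp Z(\mathfrak{g})$ for every $\langle,\rangle\in\widetilde{\mathfrak{BI}}(\mathfrak{g})$. Given $x\in\mathfrak{s}$ and $z\in Z(\mathfrak{g})$, Lemma \ref{semis} lets me write $x=\sum_i\alpha_i[u_i,v_i]$ with $u_i,v_i\in\mathfrak{s}$, and then skew-adjointness of $ad$ (Lemma \ref{skebi}) together with $[u_i,z]=0$ yields $\langle x,z\rangle=-\sum_i\alpha_i\langle v_i,[u_i,z]\rangle=0$. Hence any bi-invariant metric is block diagonal for $\mathfrak{g}=\mathfrak{s}\oplus Z(\mathfrak{g})$; its restriction to $\mathfrak{s}$ is bi-invariant on $\mathfrak{s}$ (because $ad_{\mathfrak{g}}(x)$ preserves $\mathfrak{s}$ and restricts there to $ad_{\mathfrak{s}}(x)$), while its restriction to $Z(\mathfrak{g})$ is an arbitrary inner product, every one of which is bi-invariant since $Z(\mathfrak{g})$ is abelian. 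Conversely any such pair assembles to a bi-invariant metric on $\mathfrak{g}$, so I obtain a homeomorphism $\widetilde{\mathfrak{BI}}(\mathfrak{g})\cong\widetilde{\mathfrak{BI}}(\mathfrak{s})\times\widetilde{\mathfrak{M}}(Z(\mathfrak{g}))$.

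Next I would feed in the decomposition $Aut(\mathfrak{g})\cong Aut(\mathfrak{s})\oplus Aut(Z(\mathfrak{g}))$ from the preceding Proposition, noting that $Aut(Z(\mathfrak{g}))=GL(Z(\mathfrak{g}))$ acts transitively on $\widetilde{\mathfrak{M}}(Z(\mathfrak{g}))$. Under the product decomposition an automorphism $\phi=(\phi_{\mathfrak{s}},\phi_Z)$ pulls back a metric blockwise, so the isometry action reads $(\phi_{\mathfrak{s}},\phi_Z)\cdot(g_{\mathfrak{s}},g_Z)=(\phi_{\mathfrak{s}}^{*}g_{\mathfrak{s}},\phi_Z^{*}g_Z)$. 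Since $GL(Z(\mathfrak{g}))$ is a direct factor acting only on the $Z$-block, I would quotient by it first: by Corollary \ref{biab} the $Z$-block collapses to a point, leaving $\widetilde{\mathfrak{BI}}(\mathfrak{s})$ with the residual $Aut(\mathfrak{s})$-action, whence $\mathfrak{BI}(\mathfrak{g})\cong\widetilde{\mathfrak{BI}}(\mathfrak{s})/Aut(\mathfrak{s})=\mathfrak{BI}(\mathfrak{s})$.

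The conformal case is the step I expect to be the main obstacle, because the scalar is shared between the two blocks: the group $\mathbb{R}^{\times}\times Aut(\mathfrak{s})\times GL(Z(\mathfrak{g}))$ acts by $(\lambda,\phi_{\mathfrak{s}},\phi_Z)\cdot(g_{\mathfrak{s}},g_Z)=(\lambda\phi_{\mathfrak{s}}^{*}g_{\mathfrak{s}},\lambda\phi_Z^{*}g_Z)$, so the action does not literally factor as a product of conformal actions. I would resolve this by again quotienting first by $GL(Z(\mathfrak{g}))$: because that factor contains the positive homotheties $\mu\,\mathrm{id}_{Z(\mathfrak{g})}$, for any $\lambda$ I can choose $\phi_Z$ with $\lambda\phi_Z^{*}g_0=g_0$, renormalizing the $Z$-block back to a fixed $g_0$ while leaving the $\mathfrak{s}$-block untouched. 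Using Remark \ref{acom} to commute the scalar past the automorphisms, the induced action of $\mathbb{R}^{\times}\times Aut(\mathfrak{s})$ on the quotient $\widetilde{\mathfrak{BI}}(\mathfrak{s})$ is then exactly $(\lambda,\phi_{\mathfrak{s}})\cdot g_{\mathfrak{s}}=\lambda\phi_{\mathfrak{s}}^{*}g_{\mathfrak{s}}$, so $\mathfrak{EBI}(\mathfrak{g})\cong\widetilde{\mathfrak{BI}}(\mathfrak{s})/\mathbb{R}^{\times}Aut(\mathfrak{s})=\mathfrak{EBI}(\mathfrak{s})$. The remaining routine point is that these quotient-of-a-quotient identifications are homeomorphisms, which holds because the acting group is a direct product of commuting factors.
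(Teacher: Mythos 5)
Your proof is correct and follows essentially the same route as the paper: split $\widetilde{\mathfrak{BI}}(\mathfrak{g})$ and $Aut(\mathfrak{g})$ into blocks over $\mathfrak{s}$ and $Z(\mathfrak{g})$, collapse the abelian block via Corollary~\ref{biab}, and identify what remains with $\mathfrak{BI}(\mathfrak{s})$ and $\mathfrak{EBI}(\mathfrak{s})$. In fact you supply two details the paper's proof leaves implicit --- the verification that $\mathfrak{s}\perp Z(\mathfrak{g})$ for every bi-invariant metric (needed to write the metric as a sum of blocks), and the renormalization argument showing the shared conformal scalar in the $\mathfrak{EBI}$ case causes no trouble --- so your write-up is, if anything, more complete.
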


\begin{proof}

Let us consider a metric $\langle\cdot,\cdot\rangle_{\mathfrak{g}}$ on $\mathfrak{g}\cong\mathfrak{s}\oplus Z(\mathfrak{g})$. This metric can be expressed as a sum of metrics $\langle\cdot,\cdot\rangle_{\mathfrak{g}}=\langle\cdot,\cdot\rangle_{\mathfrak{s}}+\langle\cdot,\cdot\rangle_{Z(\mathfrak{g})}.$  If $\lambda(\phi\oplus\psi)\in\mathbb{R}^{\times} (Aut(\mathfrak{s})\oplus Aut(Z(\mathfrak{g})))$ then

$$\lambda(\phi\oplus\psi)^{*}(\langle\cdot ,\cdot \rangle_{\mathfrak{s}}+\langle\cdot ,\cdot \rangle_{Z(\mathfrak{g})})=\lambda\phi^{*}(\langle\cdot ,\cdot \rangle_{\mathfrak{s}})+\lambda\psi^{*}(\langle\cdot,\cdot\rangle_{Z(\mathfrak{g})}).$$

Thus we can identify the moduli space $\widetilde{\mathfrak{BI}}(\mathfrak{s}\oplus Z(\mathfrak{g}))/\mathbb{R}^{\times}Aut(\mathfrak{s}\oplus Z(\mathfrak{g}))$ with the product $$(\widetilde{\mathfrak{BI}}(\mathfrak{s})/\mathbb{R}^{\times}Aut(\mathfrak{s}))\times(\widetilde{\mathfrak{BI}}(Z(\mathfrak{g}))/\mathbb{R}^{\times}Aut(Z(\mathfrak{g}))).$$ Since $Z(\mathfrak{g})$ is abelian, by Corollary \ref{biab} we have $\widetilde{\mathfrak{BI}}(Z(\mathfrak{g}))/\mathbb{R}^{\times}Aut(Z(\mathfrak{g}))=\{\langle\cdot,\cdot\rangle_{0}\}$, then $\mathfrak{EBI}(\mathfrak{g})=\mathfrak{EBI}(\mathfrak{s})\times\{\langle\cdot,\cdot\rangle_{0}\}$ wich is homeomorphic to $\mathfrak{EBI}(\mathfrak{s}).$ In conclusion we have that for $\mathfrak{g}\cong\mathfrak{s}\oplus Z(\mathfrak{g})$ the moduli spaces $\mathfrak{BI}(\mathfrak{g})$ and $\mathfrak{EBI}(\mathfrak{g})$ are homeomorphic to $\mathfrak{BI}(\mathfrak{s})$ and $\mathfrak{EBI}(\mathfrak{s})$ respectively.

\end{proof}

With this lemma, we can conclude that the study of moduli spaces $\mathfrak{BI}(\mathfrak{g})$ and $\mathfrak{EBI}(\mathfrak{g})$ reduces to the case of compact and semi-simple Lie groups. We know from Theorem \ref{bicom} that any compact Lie group admits a bi-invariant metric. The following lemma will be useful for examining what happens when the group is compact and simple.

\begin{Lema}\label{bisim}

If we have a compact and simple Lie group, then the bi-invariant metric is unique up to a positive scalar multiple.
    
\end{Lema}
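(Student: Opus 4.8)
The plan is to exploit the irreducibility of the adjoint representation of a simple Lie algebra together with a Schur-type argument. First I would fix one bi-invariant inner product $\langle,\rangle_{0}\in\widetilde{\mathfrak{BI}}(\mathfrak{g})$, which exists by Theorem \ref{bicom}, and let $\langle,\rangle$ be an arbitrary element of $\widetilde{\mathfrak{BI}}(\mathfrak{g})$. Since both are inner products on the same vector space, there is a unique $\langle,\rangle_{0}$-symmetric, positive-definite operator $A\colon\mathfrak{g}\to\mathfrak{g}$ with $\langle x,y\rangle=\langle Ax,y\rangle_{0}$ for all $x,y\in\mathfrak{g}$. The goal is to show $A$ is a positive scalar multiple of the identity.

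The next step is to show that $A$ commutes with the adjoint action. By Lemma \ref{adinbi}, every $Ad(g)$ is an isometry of both metrics; in particular $Ad(g)$ is orthogonal with respect to $\langle,\rangle_{0}$, so its $\langle,\rangle_{0}$-adjoint equals $Ad(g)^{-1}$. Writing out invariance of $\langle,\rangle$ in terms of $A$ and using orthogonality with respect to $\langle,\rangle_{0}$ yields $Ad(g)^{-1}A\,Ad(g)=A$, that is, $A\,Ad(g)=Ad(g)\,A$ for every $g\in G$. Differentiating gives $A\,ad(x)=ad(x)\,A$ for all $x\in\mathfrak{g}$, so $A$ lies in the commutant of the adjoint representation.

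Then I would use simplicity to obtain irreducibility of the adjoint representation: any $Ad(G)$-invariant subspace $V\subset\mathfrak{g}$ is $ad$-invariant, i.e.\ $[\mathfrak{g},V]\subset V$, so $V$ is an ideal; since $\mathfrak{g}$ is simple, $V=\{0\}$ or $V=\mathfrak{g}$. Finally, because $A$ is $\langle,\rangle_{0}$-symmetric it admits a real eigenvalue $\lambda$, and its eigenspace $E_{\lambda}$ is $Ad(G)$-invariant since $A$ commutes with every $Ad(g)$. By irreducibility $E_{\lambda}=\mathfrak{g}$, forcing $A=\lambda\,\mathrm{Id}$, with $\lambda>0$ by positive-definiteness. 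Hence $\langle,\rangle=\lambda\langle,\rangle_{0}$, which is exactly uniqueness up to a positive scalar.

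The main obstacle is this last step, where one must avoid the subtleties of Schur's lemma over $\mathbb{R}$: the commutant of an irreducible real representation may a priori be $\mathbb{R}$, $\mathbb{C}$, or $\mathbb{H}$, so commuting with the representation does not by itself force scalarity. The clean way around this is precisely the observation that $A$ is self-adjoint for the fixed inner product $\langle,\rangle_{0}$, hence diagonalizable with real spectrum; this makes each eigenspace an $Ad(G)$-invariant subspace and lets irreducibility collapse $A$ to a single scalar, sidestepping the general real Schur dichotomy entirely. A reader who prefers can instead identify $\langle,\rangle_{0}$ with (a negative multiple of) the Killing form and run the same eigenspace argument, but the symmetric-operator formulation keeps the proof self-contained.
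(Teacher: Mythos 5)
Your proof is correct. The paper does not actually prove this lemma (it defers to the reference \cite{Alexandrino}), and your argument --- encoding the second metric by a $\langle,\rangle_{0}$-symmetric positive operator $A$, showing $A$ commutes with $Ad(G)$, and using an eigenspace of $A$ as a nonzero ideal of the simple algebra $\mathfrak{g}$ to force $A=\lambda\,\mathrm{Id}$ --- is precisely the standard Schur-type proof found there, with the self-adjointness of $A$ correctly used to bypass the real Schur dichotomy.
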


The reader interested in a proof, may consult \cite{Alexandrino}.

\begin{Rem}\label{killbi}

In \cite[Theorem 2.35]{Alexandrino}, it is verified that the Killing form, $B(X, Y) = tz(ad(X)ad(Y))$ is $Ad$ invariant. Furthermore, if the group is compact, connected, and semi-simple, then $B$ is negative definite and $-B$ plays the role of a bi-invariant metric. In the case of Lemma \ref{bisim}, the unique metric up to scalar multiple is represented by the Killing form. That is, for any bi-invariant metric $\langle\cdot,\cdot\rangle$ on $\mathfrak{g}$ there exists $\alpha>0$ such that $-\alpha B = \langle\cdot,\cdot\rangle$.
    
\end{Rem}

\begin{Lema}\label{semisbior}

Let $\mathfrak{s}=\mathfrak{a}_{1}\oplus\cdot\cdot\cdot\oplus\mathfrak{a}_{k}$  a semi-simple Lie algebra, and $\langle\cdot,\cdot\rangle\in\widetilde{\mathfrak{BI}}(\mathfrak{s})$. Then, $\langle\cdot,\cdot\rangle$ restricted to each $\mathfrak{a}_{i}$ is bi-invariant in $\mathfrak{a}_{i}$, and for all $u\in\mathfrak{a}_{i}$, $v\in\mathfrak{a}_{j}$ with $i\neq j$, it holds that $\langle u,v\rangle=0$
    
\end{Lema}

\begin{proof}

Let's recall that the Lie bracket in $\mathfrak{s}$ decomposes as $[\cdot,\cdot]=[\cdot,\cdot]_{1}+\cdot\cdot\cdot+[\cdot,\cdot]_{k}$, where each $[\cdot,\cdot]_{i}$ is the bracket of the summand $\mathfrak{a}_{i}$. Now, let $x$, $y$, and $z$ be in $\mathfrak{a}_{i}$. Using Lemma \ref{skebi}, we have $\langle[x,y]_{i},z\rangle=\langle[x,y],z\rangle=-\langle y,[x,z]\rangle=-\langle y,[x,z]_{i}\rangle$. This implies that $\langle\cdot,\cdot\rangle$ restricted to $\mathfrak{a}_{i}$ is skew-adjoint, and by Lemma \ref{skebi}, it is bi-invariant in $\mathfrak{a}_{i}$.

Finally, let $u\in\mathfrak{a}_{i}$, $v\in\mathfrak{a}_{j}$ with $i\neq j$. Using Lemma \ref{semis}, since $\mathfrak{a}_{i}$ is simple, we have $u=\sum_{l}\alpha_{l}[u'_{l},u''_{l}]$ with $u'_{l},u''_{l}\in\mathfrak{a}_{i}$. With this in mind, we calculate $$\langle u,v\rangle=\sum_{l}\alpha_{l}\langle[u'_{l},u''_{l}],v\rangle=\sum_{l}-\alpha_{l}\langle u''_{l},[u'_{l},v]\rangle=0.$$

This is because $[u'_{l},v]=0$ since $u'_{l}\in\mathfrak{a}_{i}$ and $v\in\mathfrak{a}_{j}$.

\end{proof}

As previously mentioned in Lemma \ref{autbi}, the study of our moduli spaces is reduced to the case of compact, semi-simple groups. The following result gives us a description of the space $\mathfrak{BI}(\mathfrak{s})$, when $\mathfrak{s}$ semi-simple.

\begin{Thm}\label{bitilde}

If $S$ is a compact, connected, and semi-simple Lie group with Lie algebra decomposition $\mathfrak{s}=\mathfrak{a}_{1}\oplus\cdots\oplus\mathfrak{a}_{k}$, where $\mathfrak{a}_{i}$ is simple, it holds that $\widetilde{\mathfrak{BI}}(\mathfrak{s})$ can be identified with $\{(\alpha_{1},...,\alpha_{k})|\alpha_{i}>0 \}\subset\mathbb{R}^{k}$.

\end{Thm}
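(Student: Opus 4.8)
The plan is to fix, on each simple factor $\mathfrak{a}_i$, a reference bi-invariant metric — for definiteness the one coming from the negative of the Killing form, $-B_i$, which exists and is positive definite by Remark~\ref{killbi} since each $\mathfrak{a}_i$ is compact and simple (Splitting Theorem~\ref{split}). I would then show that an arbitrary bi-invariant metric on $\mathfrak{s}$ is forced to be the \emph{orthogonal} direct sum of scalar multiples $\alpha_i(-B_i)$, and that conversely every such orthogonal sum is bi-invariant. The assignment $\langle,\rangle \mapsto (\alpha_1,\dots,\alpha_k)$ then gives the desired identification.

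The first key step is to prove that the decomposition $\mathfrak{s}=\mathfrak{a}_1\oplus\cdots\oplus\mathfrak{a}_k$ is orthogonal with respect to \emph{any} $\langle,\rangle\in\widetilde{\mathfrak{BI}}(\mathfrak{s})$. Fix $i\neq j$ and take $u\in\mathfrak{a}_i$, $v\in\mathfrak{a}_j$. Since $\mathfrak{a}_i$ is simple, Lemma~\ref{semis} lets me write $u=\sum_m \beta_m[x_m,y_m]$ with $x_m,y_m\in\mathfrak{a}_i$. Applying skew-adjointness (Lemma~\ref{skebi}) gives
$$\langle u,v\rangle=\sum_m\beta_m\langle[x_m,y_m],v\rangle=-\sum_m\beta_m\langle y_m,[x_m,v]\rangle.$$
Because $\mathfrak{a}_i,\mathfrak{a}_j$ are distinct ideals of $\mathfrak{s}$, one has $[x_m,v]\in[\mathfrak{a}_i,\mathfrak{a}_j]\subseteq\mathfrak{a}_i\cap\mathfrak{a}_j=\{0\}$, so $\langle u,v\rangle=0$. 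Hence distinct factors are orthogonal, and $\langle,\rangle$ is completely determined by its restrictions $\langle,\rangle|_{\mathfrak{a}_i}$.

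The second step is to identify these restrictions. Since $\mathfrak{a}_i$ is a subalgebra, the restriction of the bracket of $\mathfrak{s}$ to $\mathfrak{a}_i$ is the bracket of $\mathfrak{a}_i$, so skew-adjointness of $\langle,\rangle$ passes to $\langle,\rangle|_{\mathfrak{a}_i}$, which is therefore a bi-invariant metric on the compact simple algebra $\mathfrak{a}_i$. By Lemma~\ref{bisim} and Remark~\ref{killbi} there is a unique $\alpha_i>0$ with $\langle,\rangle|_{\mathfrak{a}_i}=\alpha_i(-B_i)$. This produces the map $\langle,\rangle\mapsto(\alpha_1,\dots,\alpha_k)\in\{(\alpha_1,\dots,\alpha_k)\mid\alpha_i>0\}$. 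For the reverse direction, given positive scalars $\alpha_i$ I define $\langle,\rangle:=\bigoplus_i\alpha_i(-B_i)$ as an orthogonal sum. Writing $x=\sum x_i$, $y=\sum y_i$, $z=\sum z_i$ and using that distinct ideals commute, both $[x,y]=\sum_i[x_i,y_i]$ and the orthogonality of the factors reduce skew-adjointness on $\mathfrak{s}$ to skew-adjointness on each $\mathfrak{a}_i$, which holds by construction; hence $\langle,\rangle\in\widetilde{\mathfrak{BI}}(\mathfrak{s})$. The two assignments are mutually inverse, giving a bijection, and since the metric depends linearly (hence continuously) on $(\alpha_1,\dots,\alpha_k)$ while each $\alpha_i$ is recovered continuously from the matrix entries of $\langle,\rangle$, the bijection is a homeomorphism.

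I expect the main obstacle to be the orthogonality step: the whole identification hinges on showing that bi-invariance forces distinct simple ideals to be perpendicular, which is exactly where the interplay between Lemma~\ref{semis} (each simple ideal equals its own commutator) and the vanishing of $[\mathfrak{a}_i,\mathfrak{a}_j]$ for $i\neq j$ enters. Once orthogonality is established, the remaining steps — restricting, invoking uniqueness on each simple factor, and checking continuity of the correspondence — are routine.
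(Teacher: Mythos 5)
Your proposal is correct and follows the same overall strategy as the paper: fix the Killing form $-B_i$ as a reference metric on each simple factor, use Lemma~\ref{bisim} and Remark~\ref{killbi} to see that the restriction of any bi-invariant metric to $\mathfrak{a}_i$ is $\alpha_i(-B_i)$, and read off the tuple $(\alpha_1,\dots,\alpha_k)$. The substantive difference is that you explicitly prove the orthogonality of distinct simple ideals --- writing $u\in\mathfrak{a}_i$ as a sum of brackets via Lemma~\ref{semis} and then using skew-adjointness (Lemma~\ref{skebi}) together with $[\mathfrak{a}_i,\mathfrak{a}_j]=\{0\}$ to kill the cross terms $\langle u,v\rangle$. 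The paper's proof skips this point: it asserts that $\langle,\rangle_{\mathfrak{s}}$ ``decomposes uniquely as a bi-invariant metric in each summand,'' and only afterwards records orthogonality as Remark~\ref{semisimport}, presented as a \emph{consequence} of the theorem even though it is really the ingredient needed to rule out off-diagonal terms in the decomposition. Your version closes that gap, and your additional checks (that every orthogonal sum $\bigoplus_i\alpha_i(-B_i)$ is indeed bi-invariant, and that the correspondence is a homeomorphism rather than a mere bijection) are routine but worth having on record, since the topology on $\widetilde{\mathfrak{BI}}(\mathfrak{s})$ matters for the later orbifold statements.
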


\begin{proof}

For $\mathfrak{s} = \mathfrak{a}_{1} \oplus \cdots \oplus \mathfrak{a}_{k}$, we know from Lemma \ref{bisim} that each $\mathfrak{a}_{i}$ admits a unique bi-invariant metric up to positive scalar multiples, and without loss of generality by Remark \ref{killbi} we can assume that it is given by the Killing form $-B_{i}$. In this way, for each $\alpha_{i} > 0$, the metric $$\alpha_{1}(-B_{1}) + \cdots + \alpha_{k}(-B_{k})$$ is bi-invariant in $\mathfrak{s}$. Now by Lemma \ref{semisbior}, if $\langle\cdot,\cdot \rangle_{\mathfrak{s}}$ is a bi-invariant metric in $\mathfrak{s}$, then the restriction of $\langle\cdot,\cdot \rangle_{\mathfrak{s}}$ to each $\mathfrak{a}_{i}$ is a bi-invariant metric in $\mathfrak{a}_{i}$. As a result, $\langle\cdot,\cdot \rangle_{\mathfrak{s}}$ decomposes uniquely as a bi-invariant metric in each summand, so we have that $$\langle\cdot,\cdot \rangle_{\mathfrak{s}} = \alpha_{1}(-B_{1}) + \cdots + \alpha_{k}(-B_{k}).$$

Thus, we can identify $\widetilde{\mathfrak{BI}}(\mathfrak{s})$ with $\{(\alpha_{1},...,\alpha_{k})|\alpha_{i}>0\}$.

\end{proof}

\begin{Rem}\label{semisimport}

From Theorem \ref{bitilde}, we can conclude that in a compact and semi-simple group $S$ with $\mathfrak{s}=\mathfrak{a}_{1}\oplus\cdot\cdot\cdot\oplus\mathfrak{a}_{k}$, the decomposition $\mathfrak{a}_{1}\oplus\cdot\cdot\cdot\oplus\mathfrak{a}_{k}$ is orthogonal with respect to any $\langle\cdot,\cdot\rangle\in\widetilde{\mathfrak{BI}}(\mathfrak{s})$.

\end{Rem}

\begin{Rem}\label{grbi}
    
Note that in this case, the space $\widetilde{\mathfrak{BI}}(\mathfrak{s})=\{(\alpha_{1},...,\alpha_{k})|\alpha_{i}>0\}$ has the structure of a group with the product of real numbers entry by entry. This fact has strong consequences later.

\end{Rem}

\subsection{Action of the automorphisms group on \texorpdfstring{$\widetilde{\mathfrak{BI}}$}{BI}.}

By Lemma \ref{adinbi} we recall that a bi-invariant metric on a Lie group $G$ is invariant under $Ad(g)$ for all $g\in G$. Thus any metric in $\widetilde{\mathfrak{BI}}(\mathfrak{g})$ remains fixed under the action $(Ad(g),\langle\cdot ,\cdot \rangle)\mapsto Ad(g)^{*}\langle\cdot ,\cdot \rangle=\langle\cdot,\cdot\rangle$. 
\\

This leads us to studying the action of the automorphisms of $\mathfrak{g}$ that are distinct from the ones given as $Ad(g)$. In order to give a complete description of the moduli spaces of bi-invariant metrics, we need to know more about the group of automorphisms of a semi-simple Lie algebra.

\begin{Def}

For $G$ a connected Lie group, we define the group of inner automorphisms of the Lie algebra $\mathfrak{g}$ as

$$Inn(\mathfrak{g})=\{Ad(g):\mathfrak{g}\rightarrow\mathfrak{g}|g\in G\}.$$

\end{Def}

It is easy to verify that $Inn(\mathfrak{g})$ does not depend on $G$. In \cite[Chapter II]{Serre comp} we can find the following result.

\begin{Thm}
Let $G$ be a connected semi-simple Lie group. Then the connected component of the identity of the automorphism group $Aut^{0}(G)$ in $Aut(G)$ coincides with $Inn(G).$
\end{Thm}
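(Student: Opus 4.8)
The plan is to prove that for a connected semisimple Lie group $G$, the identity component $Aut^0(G)$ of the automorphism group equals the group of inner automorphisms $Inn(G)$. Since this is cited from \cite{Serre comp}, I expect the intended argument proceeds at the Lie algebra level, where $Aut(\mathfrak{g})$ is a Lie group whose Lie algebra is the derivation algebra $Der(\mathfrak{g})$, and $Inn(\mathfrak{g})$ is generated by the exponentials $\mathrm{e}^{ad(x)}$ as in the commutative diagram already displayed in the excerpt. The key structural input is that a semisimple Lie algebra has \emph{no outer derivations}, i.e. every derivation is inner: $Der(\mathfrak{g}) = ad(\mathfrak{g})$.

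First I would establish that $Der(\mathfrak{g}) = ad(\mathfrak{g})$ for semisimple $\mathfrak{g}$. The standard route uses the nondegeneracy of the Killing form $B$ (Cartan's criterion, valid since $\mathfrak{g}$ is semisimple): given any derivation $D \in Der(\mathfrak{g})$, the linear functional $x \mapsto \mathrm{tr}(D\, ad(x))$ is represented via $B$ by a unique element $y \in \mathfrak{g}$, and a short computation using the derivation identity $[D, ad(x)] = ad(Dx)$ together with the invariance of $B$ shows that $D - ad(y)$ lies in the kernel of $B$, forcing $D = ad(y)$. This identifies the Lie algebra of $Aut(\mathfrak{g})$ with $ad(\mathfrak{g}) \cong \mathfrak{g}$ (the last isomorphism because semisimple algebras have trivial center, so $ad$ is injective).

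Next I would pass from Lie algebras to groups. Since $Aut^0(G) = Aut^0(\mathfrak{g})$ is the connected Lie group with Lie algebra $Der(\mathfrak{g}) = ad(\mathfrak{g})$, and $Inn(\mathfrak{g})$ is by definition the connected subgroup generated by $\{\mathrm{e}^{ad(x)} : x \in \mathfrak{g}\} = \{Ad(\exp x)\}$, the two connected subgroups of $Aut(\mathfrak{g})$ share the same Lie algebra $ad(\mathfrak{g})$. A connected Lie group is determined by its Lie algebra as an immersed connected subgroup, and since $Inn(\mathfrak{g})$ is itself connected (being generated by a connected neighborhood of the identity via exponentials) with the full Lie algebra $Der(\mathfrak{g})$, it must exhaust the identity component. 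Hence $Inn(\mathfrak{g}) = Aut^0(\mathfrak{g})$, which under the identification $Aut^0(G) \cong Aut^0(\mathfrak{g})$ and $Inn(G) \cong Inn(\mathfrak{g})$ (using that $Inn$ is independent of $G$, as noted above) gives the claim.

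The main obstacle is the first step, showing semisimple Lie algebras have only inner derivations; everything after that is a soft connectedness argument once one knows both subgroups have the same Lie algebra. The delicate points to handle carefully are the invariance identity for the Killing form ensuring $B(Dx, z) + B(x, Dz) = 0$ for derivations, and the nondegeneracy of $B$ which is exactly where semisimplicity is essential — this is also why the statement fails without it. I would also note the matching of the abstract automorphism group $Aut(G)$ with $Aut(\mathfrak{g})$, which for connected $G$ follows because an automorphism of $G$ is determined by its differential at the identity and every automorphism of $\mathfrak{g}$ integrates to one of the simply connected cover; passing to identity components is what makes this clean and avoids worrying about the discrete component group $Aut/Aut^0 = Out(\mathfrak{g})$.
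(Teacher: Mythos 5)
The paper offers no proof of this statement: it is quoted verbatim as a known result from Serre's \emph{Complex Semisimple Lie Algebras} (Chapter II), so there is no internal argument to compare yours against. Your proposal is the standard and correct proof: the nondegeneracy of the Killing form gives $Der(\mathfrak{g})=ad(\mathfrak{g})$ (no outer derivations), and then $Inn(\mathfrak{g})$ and $Aut^{0}(\mathfrak{g})$ are connected subgroups of $Aut(\mathfrak{g})$ with the same Lie algebra, hence coincide; the passage back to $Aut(G)$ via injectivity of the differential map for connected $G$ is handled adequately. One small imprecision: in the derivation argument it is not $D-ad(y)$ that lies in the kernel of $B$, but rather $(D-ad(y))x$ that lies in the radical of $B$ for every $x\in\mathfrak{g}$, which by nondegeneracy forces $D=ad(y)$; this is a matter of phrasing, not a gap.
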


\begin{Rem}

The inner automorphisms of $\mathfrak{g}$ are induced by inner automorphisms of $G$, since for each $g\in G$, $Ad(g)$ is the differential of some inner automorphism of $G$. When $\mathfrak{g}$ is semi-simple, then $Inn(\mathfrak{g})=Aut^{0}(\mathfrak{g})$, that is, the connected component of the identity in $Aut(\mathfrak{g})$ is comprised solely of internal automorphisms.
    
\end{Rem}

In this way, we know that if $S$ is compact and semi-simple then for its Lie algebra $\mathfrak{s}$, the action of $Inn(\mathfrak{s})$ on $\widetilde{\mathfrak{BI}}(\mathfrak{s})$ is trivial, so we must only study how the outer automorphisms of the Lie algebra act on $\widetilde{\mathfrak{BI}}(\mathfrak{s})$.

\begin{Def}

For a Lie algebra $\mathfrak{g}$, we define the group of outer automorphisms as $Out(\mathfrak{g})=Aut(\mathfrak{g})/Inn(\mathfrak{g}).$
    
\end{Def}

We recall that for a Lie group $G$, the quotient $G/G^{0}$ is a group where each element corresponds to a connected component of $G$. In \cite[Corollary 2]{Murakami} it is verified that for a connected and semi-simple Lie group we have that $Aut(G)/Aut^{0}(G)$ is finite. In this way, we conclude that if $\mathfrak{s}$ is a semi-simple Lie algebra, then $Aut(\mathfrak{s})$ has a finite number of connected components and therefore $Out(\mathfrak{s})$ is a finite group.

 \begin{Lema}\label{compsimpinv}

If $G$ is a compact and simple Lie group then the action of $Out(\mathfrak{g})$ on $\widetilde{\mathfrak{BI}}(\mathfrak{g})$ is trivial.

\end{Lema}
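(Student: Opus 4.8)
The plan is to prove something slightly stronger, namely that the \emph{entire} automorphism group $Aut(\mathfrak{g})$ acts trivially on $\widetilde{\mathfrak{BI}}(\mathfrak{g})$, from which the triviality of the induced $Out(\mathfrak{g})$-action follows at once. The starting point is the description of $\widetilde{\mathfrak{BI}}(\mathfrak{g})$ for a simple Lie algebra: by Lemma \ref{bisim} together with Remark \ref{killbi}, every bi-invariant metric on $\mathfrak{g}$ is of the form $-\alpha B$ for some $\alpha>0$, where $B(x,y)=\mathrm{tr}(ad(x)ad(y))$ is the Killing form. Hence it suffices to verify that each $\phi\in Aut(\mathfrak{g})$ fixes $-\alpha B$.

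The heart of the argument is the invariance of the Killing form under automorphisms. For $\phi\in Aut(\mathfrak{g})$ and $x\in\mathfrak{g}$, the fact that $\phi$ preserves the bracket yields the conjugation identity $ad(\phi(x))=\phi\circ ad(x)\circ\phi^{-1}$, since $ad(\phi(x))(z)=[\phi(x),z]=\phi([x,\phi^{-1}(z)])=(\phi\circ ad(x)\circ\phi^{-1})(z)$. Using the cyclic invariance of the trace, I would then compute
$$B(\phi(x),\phi(y))=\mathrm{tr}\bigl(ad(\phi(x))\,ad(\phi(y))\bigr)=\mathrm{tr}\bigl(\phi\,ad(x)\,ad(y)\,\phi^{-1}\bigr)=\mathrm{tr}\bigl(ad(x)\,ad(y)\bigr)=B(x,y),$$
so that $\phi^{*}(B)=B$.

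Combining these two observations, for any bi-invariant metric $\langle,\rangle=-\alpha B$ we obtain $\phi^{*}(\langle,\rangle)=-\alpha\,\phi^{*}(B)=-\alpha B=\langle,\rangle$. Thus every automorphism fixes every bi-invariant metric, so the $Aut(\mathfrak{g})$-action on $\widetilde{\mathfrak{BI}}(\mathfrak{g})$ is trivial. Since the $Out(\mathfrak{g})$-action is precisely the one induced on the quotient $Aut(\mathfrak{g})/Inn(\mathfrak{g})$, it is trivial as well, which is the desired conclusion.

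There is essentially no obstacle in this approach; the only point requiring care is the conjugation identity $ad(\phi(x))=\phi\circ ad(x)\circ\phi^{-1}$, which is a one-line consequence of $\phi$ being a Lie algebra homomorphism. It is worth emphasizing that this invariance holds for \emph{all} automorphisms of $\mathfrak{g}$, not merely the inner ones, and it is exactly this feature that forces the outer action to collapse in the simple case.
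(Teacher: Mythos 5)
Your proof is correct, and it takes a genuinely different route from the paper. The paper's argument is indirect: it observes that $\phi^{*}(-B)$ is again bi-invariant, invokes Lemma \ref{bisim} to write $\phi^{*}(-B)=\alpha(-B)$, and then uses the finiteness of $Out(\mathfrak{g})$ (cited from Murakami) to force $\alpha^{n}=1$ and hence $\alpha=1$, since $\mathbb{R}^{+}$ has no nontrivial finite subgroups. You instead prove the stronger and more structural fact that the Killing form itself is invariant under \emph{every} automorphism, via the conjugation identity $ad(\phi(x))=\phi\circ ad(x)\circ\phi^{-1}$ and cyclicity of the trace; combined with Lemma \ref{bisim} and Remark \ref{killbi}, which identify every element of $\widetilde{\mathfrak{BI}}(\mathfrak{g})$ with some $-\alpha B$, this gives triviality of the full $Aut(\mathfrak{g})$-action at once. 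Your approach buys independence from the finiteness of $Out(\mathfrak{g})$ --- an external input in the paper --- and yields the exact value $\alpha=1$ by direct computation rather than by a group-theoretic exclusion argument; the paper's approach, on the other hand, is the one that generalizes to the later lemma on semisimple algebras with pairwise non-isomorphic factors, where the same ``finite subgroup of $(\mathbb{R}^{+})^{k}$ must be trivial'' trick is reused. Both arguments are sound; yours is arguably the more self-contained for the simple case.
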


\begin{proof}

Take the bi-invariant metric given by $-B$, where $B$ is the Killing form of $\mathfrak{g}$. Let $\phi\in Out(\mathfrak{g})$. As the metric $\phi^{*}(-B)$ is bi-invariant, then by Lemma \ref{bisim} $\phi^{*}(-B)=\alpha(-B)$ for some $\alpha>0$. Since $Out(\mathfrak{g})$ has finite order, then $\phi^{n}=Id$ for some $n\in\mathbb{N}$, and thus $(\phi^{n})^{*}(-B)=\alpha^{n}(-B)=-B$. This implies that $\alpha=1$ and therefore the action of $Out(\mathfrak{g})$ is trivial on $\widetilde{\mathfrak{BI}}(\mathfrak{g})$.
    
\end{proof}

\begin{Rem}

We have that for a compact and simple group $G$, the action of $Aut(\mathfrak{g})$ on $\widetilde{\mathfrak{BI}}(\mathfrak{g})$ is trivial. With this we conclude that $\mathfrak{BI}(\mathfrak{g})=\widetilde{\mathfrak{BI}}(\mathfrak{g})=\mathbb{R}^{+}$ and $\mathfrak{EBI}(\mathfrak{g})=\{\langle\cdot,\cdot\rangle_{0}\}.$
    
\end{Rem}

Now we study the semi-simple case. To do this, we review the following results.

\begin{Prop}

If $\mathfrak{u}$ is an ideal in a Lie algebra $\mathfrak{g}$ and $\phi\in Aut(\mathfrak{g})$, then $\phi(\mathfrak{u})$ is an ideal in $\mathfrak{g}.$

\end{Prop}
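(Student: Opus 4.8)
The plan is to verify the defining property of an ideal directly, namely that $[\mathfrak{g},\phi(\mathfrak{u})]\subseteq\phi(\mathfrak{u})$. First I would record the two facts that make everything work: since $\phi\in Aut(\mathfrak{g})$ it is a linear bijection of $\mathfrak{g}$ that preserves the bracket, so on the one hand $\phi(\mathfrak{u})$ is automatically a linear subspace of $\mathfrak{g}$ (being the image of a subspace under a linear map), and on the other hand $\phi$ is in particular surjective. These are the only structural ingredients needed.

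The heart of the argument is then a one-line computation. I would take an arbitrary $y\in\mathfrak{g}$ and $v\in\phi(\mathfrak{u})$. Using surjectivity, write $y=\phi(x)$ for some $x\in\mathfrak{g}$, and by definition of the image write $v=\phi(u)$ for some $u\in\mathfrak{u}$. Since $\phi$ is a Lie algebra homomorphism,
$$[y,v]=[\phi(x),\phi(u)]=\phi([x,u]).$$
Because $\mathfrak{u}$ is an ideal we have $[x,u]\in\mathfrak{u}$, and therefore $[y,v]=\phi([x,u])\in\phi(\mathfrak{u})$. As $y\in\mathfrak{g}$ and $v\in\phi(\mathfrak{u})$ were arbitrary, this establishes $[\mathfrak{g},\phi(\mathfrak{u})]\subseteq\phi(\mathfrak{u})$, which together with the subspace property is exactly the statement that $\phi(\mathfrak{u})$ is an ideal.

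There is essentially no obstacle here; the statement is elementary and rests entirely on the bracket-preserving and bijective nature of automorphisms. The single point I would be careful not to skip is the use of \emph{surjectivity} of $\phi$ to express an arbitrary $y\in\mathfrak{g}$ as $\phi(x)$: without it one would only conclude $[\phi(x),\phi(u)]\in\phi(\mathfrak{u})$ for $x$ ranging over the image of $\phi$, which for a general endomorphism would be insufficient. Since $\phi$ is an automorphism this causes no difficulty, and the proof is complete.
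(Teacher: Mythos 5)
Your argument is correct and is essentially the paper's own proof: the paper writes an arbitrary $x\in\mathfrak{g}$ as $\phi(\phi^{-1}(x))$ and computes $[\phi(u),x]=\phi([u,\phi^{-1}(x)])\in\phi(\mathfrak{u})$, which is exactly your use of surjectivity to pull the arbitrary element back through $\phi$. No gaps; the extra remark on why surjectivity is needed is a fair point but does not change the substance.
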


\begin{proof}

Let $u \in \mathfrak{u}$ and $x \in \mathfrak{g}$. We observe that $$[\phi(u),x]=[\phi(u),\phi(\phi^{-1}(x))]=\phi([u,\phi^{-1}(x)]) \in \phi(\mathfrak{u}).$$

Thus $\phi(\mathfrak{u})$ is an ideal in $\mathfrak{g}$.

\end{proof}

\begin{Coro}\label{semij}

Let $\mathfrak{s}=\mathfrak{a}_{1}\oplus\cdots\oplus\mathfrak{a}_{k}$ be a semi-simple Lie algebra and $\phi\in Aut(\mathfrak{g})$. Then $\phi(\mathfrak{a}_{i})=\mathfrak{a}_{j}$ for some $j.$
    
\end{Coro}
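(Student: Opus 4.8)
The plan is to combine the preceding Proposition, which guarantees that an automorphism carries ideals to ideals, with the elementary structure of the decomposition $\mathfrak{s}=\mathfrak{a}_{1}\oplus\cdots\oplus\mathfrak{a}_{k}$, thereby reducing the statement to the purely algebraic claim that \emph{every simple ideal of $\mathfrak{s}$ coincides with one of the factors $\mathfrak{a}_{j}$}. First I would note that, $\phi$ being an automorphism of $\mathfrak{s}$, the image $\mathfrak{b}:=\phi(\mathfrak{a}_{i})$ is isomorphic to $\mathfrak{a}_{i}$ and hence simple, while by the preceding Proposition it is also an ideal of $\mathfrak{s}$. So the task is to show that such a simple ideal $\mathfrak{b}$ must equal some $\mathfrak{a}_{j}$.

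Next I would exploit that the factors are mutually commuting ideals, so $[\mathfrak{a}_{i},\mathfrak{a}_{j}]=0$ for $i\neq j$, and examine the intersections $\mathfrak{b}\cap\mathfrak{a}_{j}$. Each such intersection is an ideal of $\mathfrak{s}$, hence an ideal both of the simple algebra $\mathfrak{a}_{j}$ and of the simple algebra $\mathfrak{b}$; simplicity then forces the dichotomy $\mathfrak{b}\cap\mathfrak{a}_{j}\in\{0,\mathfrak{a}_{j}\}$ and simultaneously $\mathfrak{b}\cap\mathfrak{a}_{j}\in\{0,\mathfrak{b}\}$. I would also record that $[\mathfrak{b},\mathfrak{a}_{j}]\subseteq\mathfrak{b}\cap\mathfrak{a}_{j}$, since this bracket lies in $\mathfrak{b}$ (as $\mathfrak{b}$ is an ideal) and in $\mathfrak{a}_{j}$ (as $\mathfrak{a}_{j}$ is an ideal).

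Then, applying Lemma \ref{semis} to the simple algebra $\mathfrak{b}$, I would chain the inclusions $\mathfrak{b}=[\mathfrak{b},\mathfrak{b}]\subseteq[\mathfrak{b},\mathfrak{s}]=\sum_{j}[\mathfrak{b},\mathfrak{a}_{j}]\subseteq\sum_{j}(\mathfrak{b}\cap\mathfrak{a}_{j})\subseteq\mathfrak{b}$, forcing the equality $\mathfrak{b}=\sum_{j}(\mathfrak{b}\cap\mathfrak{a}_{j})$. Since $\mathfrak{b}\neq 0$, at least one summand $\mathfrak{b}\cap\mathfrak{a}_{j}$ is nonzero, and for that index the dichotomy gives $\mathfrak{b}\cap\mathfrak{a}_{j}=\mathfrak{a}_{j}=\mathfrak{b}$, i.e. $\mathfrak{b}=\mathfrak{a}_{j}$. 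Uniqueness of the index is automatic: if two distinct factors $\mathfrak{a}_{j_{1}},\mathfrak{a}_{j_{2}}$ both satisfied $\mathfrak{b}\cap\mathfrak{a}_{j}=\mathfrak{b}$, then $\mathfrak{b}\subseteq\mathfrak{a}_{j_{1}}\cap\mathfrak{a}_{j_{2}}=0$, a contradiction. This yields $\phi(\mathfrak{a}_{i})=\mathfrak{a}_{j}$.

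The only genuinely substantive point is the identification of an abstract simple ideal with one of the prescribed summands; once the commuting-ideal relations and the simplicity dichotomy are in place, the conclusion is forced and no curvature or Killing-form orthogonality (Remark \ref{semisimport}) is actually needed. The step I would watch most carefully is verifying that $\mathfrak{b}\cap\mathfrak{a}_{j}$ is simultaneously an ideal on both sides, so that the two simplicity constraints may be applied at once.
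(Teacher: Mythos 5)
Your proof is correct and follows the same core strategy as the paper: observe that $\phi(\mathfrak{a}_i)$ is a simple ideal, examine the intersections $\phi(\mathfrak{a}_i)\cap\mathfrak{a}_j$, and invoke simplicity on both sides to force each intersection to be $0$ or to equal both $\mathfrak{a}_j$ and $\phi(\mathfrak{a}_i)$ at once. In fact your argument is tighter than the paper's at the final step: the paper simply asserts that if all these intersections vanish then $\phi(\mathfrak{a}_i)=0$ (which is false for a general subspace, e.g.\ a diagonal), whereas your chain $\phi(\mathfrak{a}_i)=[\phi(\mathfrak{a}_i),\phi(\mathfrak{a}_i)]\subseteq\sum_j[\phi(\mathfrak{a}_i),\mathfrak{a}_j]\subseteq\sum_j\bigl(\phi(\mathfrak{a}_i)\cap\mathfrak{a}_j\bigr)$ supplies exactly the missing justification, since it is the ideal property that makes the intersections control the whole of $\phi(\mathfrak{a}_i)$.
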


\begin{proof}

We know that $\phi(\mathfrak{a}_{i})$ is a simple ideal of $\mathfrak{g}$. Furthermore, for all $j$, $\phi(\mathfrak{a}_{i})\cap\mathfrak{a}_{j}$ is an ideal of $\mathfrak{a}_{j}$. But since $\mathfrak{a}_{j}$ is simple, then $\phi(\mathfrak{a}_{i})\cap\mathfrak{a}_{j}={0}$ or $\phi(\mathfrak{a}_{i})\cap\mathfrak{a}_{j}=\mathfrak{a}_{j}$. In the second case, we have that $\mathfrak{a}_{j}\subset\phi(\mathfrak{a}_{i})$ but since $\phi(\mathfrak{a}_{i})$ is simple, then $\phi(\mathfrak{a}_{i})=\mathfrak{a}_{j}$. Also, if $\phi(\mathfrak{a}_{i})\cap\mathfrak{a}_{j}={0}$ for all $j$, then $\phi(\mathfrak{a}_{i})=0$ which is a contradiction.

\end{proof}

The next step will be to study the semi-simple case where all the factors are non-isomorphic to each other.

\begin{Lema}

If $\mathfrak{s}=\mathfrak{a}_{1}\oplus\cdots\oplus\mathfrak{a}_{k}$ is a semi-simple Lie algebra where the summands are simple and not isomorphic to each other, then $Aut(\mathfrak{s})\cong Aut(\mathfrak{a}_{1})\oplus\cdots\oplus Aut(\mathfrak{a}_{k})$.
    
\end{Lema}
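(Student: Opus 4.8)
The plan is to show that any automorphism $\phi \in Aut(\mathfrak{s})$ must preserve each simple factor $\mathfrak{a}_i$, so that the natural inclusion $Aut(\mathfrak{a}_1) \oplus \cdots \oplus Aut(\mathfrak{a}_k) \hookrightarrow Aut(\mathfrak{s})$ is in fact surjective. The key tool is Corollary \ref{semij}, which already tells us that $\phi$ permutes the simple ideals: for each $i$ there is some index $\sigma(i)$ with $\phi(\mathfrak{a}_i) = \mathfrak{a}_{\sigma(i)}$. First I would observe that $\sigma$ is a permutation of $\{1,\dots,k\}$, since $\phi$ is a bijection and the $\mathfrak{a}_j$ are the complete list of minimal ideals.

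The crucial step is to rule out any nontrivial permutation using the hypothesis that the summands are pairwise non-isomorphic. If $\phi(\mathfrak{a}_i) = \mathfrak{a}_{\sigma(i)}$, then the restriction $\phi|_{\mathfrak{a}_i} : \mathfrak{a}_i \to \mathfrak{a}_{\sigma(i)}$ is a Lie algebra isomorphism, because $\phi$ respects the bracket and maps $\mathfrak{a}_i$ onto $\mathfrak{a}_{\sigma(i)}$ bijectively. Hence $\mathfrak{a}_i \cong \mathfrak{a}_{\sigma(i)}$. Since by assumption the factors are mutually non-isomorphic, this forces $\sigma(i) = i$ for every $i$. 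Therefore $\phi(\mathfrak{a}_i) = \mathfrak{a}_i$, i.e. every automorphism of $\mathfrak{s}$ preserves each simple summand.

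Once each factor is invariant, I would define the map $Aut(\mathfrak{s}) \to Aut(\mathfrak{a}_1) \oplus \cdots \oplus Aut(\mathfrak{a}_k)$ sending $\phi$ to $(\phi|_{\mathfrak{a}_1}, \dots, \phi|_{\mathfrak{a}_k})$, which is well defined by the previous paragraph and is clearly a group homomorphism. Its inverse sends a tuple $(\varphi_1, \dots, \varphi_k)$ to the direct sum $\varphi_1 \oplus \cdots \oplus \varphi_k$, which is an automorphism of $\mathfrak{s}$ because it is linear, bijective, and respects the bracket (the bracket between distinct summands vanishes as the decomposition is into ideals, and on each $\mathfrak{a}_i$ it agrees with $\varphi_i$). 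These two maps are mutually inverse, giving the claimed isomorphism $Aut(\mathfrak{s}) \cong Aut(\mathfrak{a}_1) \oplus \cdots \oplus Aut(\mathfrak{a}_k)$.

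The main obstacle is justifying that the permutation $\sigma$ is forced to be trivial; this is precisely where the non-isomorphism hypothesis enters and is the only place where it is needed. Everything else is routine verification that the decomposition into ideals makes the direct-sum construction compatible with the bracket. I would be careful to note that $\phi|_{\mathfrak{a}_i}$ genuinely lands in $\mathfrak{a}_{\sigma(i)}$ and is a Lie algebra morphism onto it, rather than merely a linear isomorphism, since it is the isomorphism of Lie algebras that yields the contradiction with pairwise non-isomorphism.
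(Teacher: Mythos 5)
Your proposal is correct and follows the same route as the paper: invoke Corollary \ref{semij} to see that $\phi$ permutes the simple ideals, use the pairwise non-isomorphism of the summands to force the permutation to be trivial, and then decompose $\phi$ as a direct sum of automorphisms of the factors. Your write-up simply spells out the details (the restriction being a Lie algebra isomorphism, the explicit mutually inverse homomorphisms) that the paper leaves implicit.
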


\begin{proof}

Let $\phi\in Aut(\mathfrak{s})$. Since all the summands are not isomorphic to each other by the previous corollary we have that $\phi(\mathfrak{a}_{i})=\mathfrak{a}_{i}$. Thus we conclude that $\phi=\phi_{1}+\cdots+\phi_{k}$, where each $\phi_{i}\in Aut(\mathfrak{a}_{i})$, which finishes the proof.
    
\end{proof}

\begin{Lema}

Let $G$ be a compact, connected, and semi-simple Lie group with Lie algebra $\mathfrak{g}=\mathfrak{a}_{1}\oplus\cdots\oplus\mathfrak{a}_{k}$, where the summands are not isomorphic to each other and are simple. Then the action of $Out(\mathfrak{g})$ on $\widetilde{\mathfrak{BI}}(\mathfrak{g})$ is trivial.
    
\end{Lema}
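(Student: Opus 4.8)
The plan is to reduce the action of $Out(\mathfrak{g})$ on $\widetilde{\mathfrak{BI}}(\mathfrak{g})$ to a product of the actions on the individual simple factors, and then invoke Lemma \ref{compsimpinv} on each factor. First I would record that, since the summands $\mathfrak{a}_{1},\ldots,\mathfrak{a}_{k}$ are pairwise non-isomorphic, the previous lemma gives $Aut(\mathfrak{g})\cong Aut(\mathfrak{a}_{1})\oplus\cdots\oplus Aut(\mathfrak{a}_{k})$, so every $\phi\in Aut(\mathfrak{g})$ splits as $\phi=\phi_{1}+\cdots+\phi_{k}$ with $\phi_{i}\in Aut(\mathfrak{a}_{i})$ and $\phi(\mathfrak{a}_{i})=\mathfrak{a}_{i}$. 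Because inner automorphisms also decompose along the product, one has $Inn(\mathfrak{g})\cong\bigoplus_{i}Inn(\mathfrak{a}_{i})$, and since $Inn(\mathfrak{g})$ fixes every bi-invariant metric (Lemma \ref{adinbi}), the action of $Aut(\mathfrak{g})$ descends to $Out(\mathfrak{g})$; thus it suffices to show that the full group $Aut(\mathfrak{g})$ acts trivially.

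Next I would use Theorem \ref{bitilde} to identify $\widetilde{\mathfrak{BI}}(\mathfrak{g})$ with $\{(\alpha_{1},\ldots,\alpha_{k}):\alpha_{i}>0\}$, where the tuple $(\alpha_{i})$ corresponds to the metric $\langle,\rangle=\alpha_{1}(-B_{1})+\cdots+\alpha_{k}(-B_{k})$ and $B_{i}$ is the Killing form of $\mathfrak{a}_{i}$ (extended by zero off $\mathfrak{a}_{i}\times\mathfrak{a}_{i}$). Since the decomposition is orthogonal for every bi-invariant metric by Remark \ref{semisimport}, and $\phi$ preserves each ideal $\mathfrak{a}_{i}$, the pullback reduces coordinatewise:
$$\phi^{*}(\langle,\rangle)=\sum_{i=1}^{k}\alpha_{i}\,\phi^{*}(-B_{i})=\sum_{i=1}^{k}\alpha_{i}\,\phi_{i}^{*}(-B_{i}),$$
where $\phi^{*}(-B_{i})$ is again supported on $\mathfrak{a}_{i}$ and agrees with $\phi_{i}^{*}(-B_{i})$ there.

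Finally, each $\mathfrak{a}_{i}$ is compact and simple, so Lemma \ref{compsimpinv} shows that $Out(\mathfrak{a}_{i})$ acts trivially on $\widetilde{\mathfrak{BI}}(\mathfrak{a}_{i})$; combined with $Inn(\mathfrak{a}_{i})$ fixing every bi-invariant metric, this gives $\phi_{i}^{*}(-B_{i})=-B_{i}$ for every $\phi_{i}\in Aut(\mathfrak{a}_{i})$. Substituting back yields $\phi^{*}(\langle,\rangle)=\sum_{i}\alpha_{i}(-B_{i})=\langle,\rangle$, so $Aut(\mathfrak{g})$, and hence $Out(\mathfrak{g})$, acts trivially. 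I expect the only delicate point to be the coordinatewise reduction of the pullback: one must justify that the cross terms of $\phi^{*}(-B_{i})$ vanish, which rests precisely on $\phi$ preserving each $\mathfrak{a}_{i}$ (from the non-isomorphic hypothesis via Corollary \ref{semij}) together with the orthogonality of the decomposition from Remark \ref{semisimport}. Everything else is a direct application of the already-established compact simple case.
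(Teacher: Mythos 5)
Your proof is correct and follows essentially the same route as the paper: both arguments decompose $\phi\in Aut(\mathfrak{g})$ along the pairwise non-isomorphic simple factors, note that each $\phi_{i}$ scales $-B_{i}$ by some $\alpha_{i}>0$, and kill the scalars using the finiteness of the outer automorphism group. The only cosmetic difference is that you invoke the already-established compact simple case (Lemma \ref{compsimpinv}) factor by factor, whereas the paper repackages that same finiteness argument globally as a homomorphism $\Phi:Out(\mathfrak{g})\rightarrow\widetilde{\mathfrak{BI}}(\mathfrak{g})\cong(\mathbb{R}^{+})^{k}$ whose image, being a finite subgroup of a torsion-free group, must be trivial.
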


\begin{proof}

Let $\langle\cdot ,\cdot \rangle_{0}=(-B_{1})+\cdots+(-B_{k})$ be the bi-invariant metric given by the sum of the Killing forms $B_{i}$ in each $\mathfrak{a}_{i}$. We define the mapping $$\Phi:Out(\mathfrak{g})\rightarrow\widetilde{\mathfrak{BI}}(\mathfrak{g}), \ \Phi(\psi)=\psi^{*}(\langle\cdot,\cdot\rangle_{0}).$$

From the previous lemma, we know that $\psi=\psi_{1}+\cdots+\psi_{k}$ where $\psi_{i}\in Aut(\mathfrak{a}_{i})$. Furthermore, each $\psi_{i}^{*}(-B_{i})=\alpha_{i}(-B_{i})$ with $\alpha_{i}>0$ because it is a bi-invariant metric. In this way, we conclude that $$\Phi(\psi)=\psi^{*}(\langle\cdot,\cdot\rangle_{0})=\psi_{1}^{*}(-B_{1})+\cdots+\psi_{k}^{*}(-B_{k})=\alpha_{1}(-B_{1})+\cdots+\alpha_{k}(-B_{k}).$$ For another $\mu\in Out(\mathfrak{g})$, we have $\Phi(\mu)=\beta_{1}(-B_{1})+\cdots+\beta_{k}(-B_{k})$ and we conclude that $\Phi(\mu\circ\psi)=\alpha_{1}\beta_{1}(-B_{1})+\cdots+\alpha_{k}\beta_{k}(-B_{k})$. In Remark \ref{grbi} we had identified $\widetilde{\mathfrak{BI}}(\mathfrak{g})$ with the multiplicative group of vectors with $k$ positive real entries. From what we have seen above, we conclude that $\Phi$ is a group homomorphism and the image of $Out(\mathfrak{g})$ is a finite subgroup of $\widetilde{\mathfrak{BI}}(\mathfrak{g})$. But this group does not have any non-trivial finite subgroups, so we conclude that $\Phi$ is trivial and the action of $Out(\mathfrak{g})$ on $\widetilde{\mathfrak{BI}}(\mathfrak{g})$ is also trivial.
    
\end{proof}

\subsection{Description of \texorpdfstring{$\mathfrak{BI}$}{BI} and \texorpdfstring{$\mathfrak{EBI}$}{EBI}}

With what has been developed so far, we can state the following theorem.

\begin{Thm}

For a compact, connected, and semi-simple Lie group $G$ with $\mathfrak{g}=\mathfrak{a}_{1}\oplus\cdot\cdot\cdot\oplus\mathfrak{a}_{k}$ where the summands are simple and pairwise
non-isomorphic, it follows that $\mathfrak{BI}(\mathfrak{g})=\widetilde{\mathfrak{BI}}(\mathfrak{g})$. Furthermore, $\mathfrak{EBI}(\mathfrak{g})=\widetilde{\mathfrak{BI}}(\mathfrak{g})/\mathfrak{\mathbb{R}^{\times}}$ corresponds to the set of vectors with $k-1$ strictly positive real number entries.
    
\end{Thm}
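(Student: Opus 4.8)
The plan is to combine the two triviality results established just above—that $Inn(\mathfrak{g})$ fixes every bi-invariant metric and that $Out(\mathfrak{g})$ acts trivially on $\widetilde{\mathfrak{BI}}(\mathfrak{g})$—to deduce that the full group $Aut(\mathfrak{g})$ acts trivially, and then to read off the two orbit spaces directly from the coordinate description of $\widetilde{\mathfrak{BI}}(\mathfrak{g})$ furnished by Theorem \ref{bitilde}.

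For the first assertion, I would argue as follows. By Lemma \ref{adinbi} every $\langle,\rangle\in\widetilde{\mathfrak{BI}}(\mathfrak{g})$ is fixed by each $Ad(g)$, so the action of $Inn(\mathfrak{g})$ is trivial and the $Aut(\mathfrak{g})$-action therefore descends to a well-defined action of the quotient $Out(\mathfrak{g})=Aut(\mathfrak{g})/Inn(\mathfrak{g})$. By the preceding lemma this descended action is itself trivial, so every $\phi\in Aut(\mathfrak{g})$ satisfies $\phi^{*}\langle,\rangle=\langle,\rangle$. Hence each orbit is a single point and $\mathfrak{BI}(\mathfrak{g})=\widetilde{\mathfrak{BI}}(\mathfrak{g})/Aut(\mathfrak{g})=\widetilde{\mathfrak{BI}}(\mathfrak{g})$.

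For the second assertion, I would invoke the earlier corollary identifying $\mathfrak{EBI}(\mathfrak{g})$ with $\widetilde{\mathfrak{BI}}(\mathfrak{g})/\mathbb{R}^{\times}Aut(\mathfrak{g})$. Since $Aut(\mathfrak{g})$ acts trivially by the first part, this quotient collapses to $\widetilde{\mathfrak{BI}}(\mathfrak{g})/\mathbb{R}^{\times}$, where by Remark \ref{acom} the scalar factor acts by the uniform rescaling $\langle,\rangle\mapsto\lambda\langle,\rangle$. Under the identification of Theorem \ref{bitilde} a metric corresponds to a tuple $(\alpha_{1},\ldots,\alpha_{k})$ with all $\alpha_{i}>0$, and this rescaling sends $(\alpha_{1},\ldots,\alpha_{k})\mapsto(\lambda\alpha_{1},\ldots,\lambda\alpha_{k})$. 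I would then normalize, dividing through by $\alpha_{1}$, to obtain the map
$$[(\alpha_{1},\ldots,\alpha_{k})]\longmapsto(\alpha_{2}/\alpha_{1},\ldots,\alpha_{k}/\alpha_{1}),$$
which is a bijection onto $\{(\beta_{2},\ldots,\beta_{k}) : \beta_{i}>0\}$, the set of vectors with $k-1$ strictly positive entries; continuity of this map and of its inverse is immediate from the coordinate formulas.

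The substantive content is already packaged in the triviality lemmas for the inner and outer actions, so the argument is mainly bookkeeping. The two points requiring a little care are, first, confirming that triviality of the $Inn$-action and of the descended $Out$-action genuinely combine to give triviality of the \emph{full} $Aut(\mathfrak{g})$-action rather than merely of the quotient action, and second, checking that $\widetilde{\mathfrak{BI}}(\mathfrak{g})/\mathbb{R}^{\times}$ is homeomorphic—and not just set-theoretically bijective—to the lower-dimensional positive orthant. The normalization above provides an explicit continuous section and settles the latter, and I do not expect any obstacle beyond verifying these routine compatibilities.
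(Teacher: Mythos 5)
Your proposal is correct and follows essentially the paper's intended route: the paper states this theorem without a written proof, asserting it follows from the preceding results, and your argument is exactly that deduction (triviality of the $Inn$- and $Out$-actions combine to give triviality of the full $Aut(\mathfrak{g})$-action, then the $\mathbb{R}^{\times}$-quotient of the positive orthant is identified with $(\mathbb{R}^{+})^{k-1}$ by normalizing). Your explicit normalization by $\alpha_{1}$ is a clean way to realize the homeomorphism in the second claim.
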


Now we are going to study the case $G=H\times\cdot\cdot\cdot\times H$, where $H$ is a compact and simple Lie group, with Lie algebra $\mathfrak{g}=\mathfrak{h}_{1}\oplus\cdot\cdot\cdot\oplus\mathfrak{h}_{k}$, and each $\mathfrak{h}_{i}\cong Lie(H)$. For an automorphism $\phi\in Aut(\mathfrak{h}_{1}\oplus\cdot\cdot\cdot\oplus\mathfrak{h}_{k})$ Corollary \ref{semij} ensures that $\phi(\mathfrak{h}_{i})=\mathfrak{h}_{j}$. This small detail allows us to relate $Aut(\mathfrak{g})$ to the symmetric group of permutations. We start with the following definition.

\begin{Def}

Given a set $X$ and $S_{n}$ the symmetric group of permutations of $n$ elements, we define the left action of $S_{n}$ on $X^{n}=X\times\cdot\cdot\cdot\times X$ given by $$(\sigma,(x_{1},...,x_{n}))\mapsto(x_{\sigma(1)},...,x_{\sigma(n)}). $$
    
\end{Def}

\begin{Rem}

For $\mathfrak{g}=\mathfrak{h}_{1}\oplus\cdot\cdot\cdot\oplus\mathfrak{h}_{k}$ semi-simple, where $\mathfrak{h}_{i}\cong\mathfrak{a}$, we have a group homomorphism $T:Aut(\mathfrak{g})\rightarrow S_{k}$ given in the following way: if $\phi\in Aut(\mathfrak{g})$, then $T(\phi)\in S_{k}$ is such that $T(\phi)(i)=j$ if and only if $\phi(\mathfrak{h}_{i})=\mathfrak{h}_{j}.$ In conclusion, we have that each automorphism of $\mathfrak{h}_{1}\oplus\cdot\cdot\cdot\oplus\mathfrak{h}_{k}$ is given by automorphims ${\phi_{i}\in Aut(\mathfrak{h}_{i})}$ for $i=1,\ldots,k$, and a permutation $\sigma\in S_{k},$ where it holds that, if $x_{i}\in\mathfrak{h}_{i}$, then $\phi_{i}(x_{i})\in\mathfrak{h}_{\sigma(i)}$.
    
\end{Rem}

Now we will see what happens when each $\mathfrak{h}_{i}$ is the Lie algebra of a simple and compact group.

\begin{Thm}

Let $H$ be a connected, simple, and compact Lie group and consider the product of $H$ $k$-times, $G=H\times\cdot\cdot\cdot\times H$. Then the action of $Aut(\mathfrak{g})$ on $\widetilde{\mathfrak{BI}}(\mathfrak{g})$ coincides with the action of the symmetric group $S_{k}$. Observe that the inner  product $\langle\cdot,\cdot\rangle$ can be decomposed as a  sum of inner products:
\[
\langle\cdot,\cdot\rangle = \langle\cdot,\cdot\rangle_{1}+\ldots +\langle\cdot,\cdot\rangle_{k},
\]
where $\langle\cdot,\cdot\rangle_i = \langle\cdot,\cdot\rangle|_{\mathfrak{h}_i}$. With this we set
\[
(\sigma, \langle\cdot ,\cdot\rangle) \mapsto  \langle\cdot,\cdot\rangle_{\sigma(1)}+\ldots +\langle\cdot,\cdot\rangle_{\sigma(k)}.
\]
    
\end{Thm}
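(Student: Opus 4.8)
The plan is to exploit the structural description of automorphisms recorded in the preceding remark: every $\phi\in Aut(\mathfrak{g})$ is determined by a permutation $\sigma=T(\phi)\in S_{k}$ together with Lie algebra isomorphisms $\phi_{i}:\mathfrak{h}_{i}\to\mathfrak{h}_{\sigma(i)}$, so that $\phi(x_{i})=\phi_{i}(x_{i})\in\mathfrak{h}_{\sigma(i)}$ for $x_{i}\in\mathfrak{h}_{i}$. I would then compute the pullback $\phi^{*}\langle,\rangle$ directly on the normal form for bi-invariant metrics furnished by Theorem \ref{bitilde}. Writing $\langle,\rangle=\alpha_{1}(-B_{1})+\cdots+\alpha_{k}(-B_{k})$ with $\alpha_{j}>0$, I would evaluate $(\phi^{*}\langle,\rangle)(x,y)=\langle\phi(x),\phi(y)\rangle$ separately on the diagonal blocks $x,y\in\mathfrak{h}_{i}$ and on the mixed pairs $x\in\mathfrak{h}_{i}$, $y\in\mathfrak{h}_{j}$ with $i\neq j$.

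The mixed terms vanish: since $\phi(x)\in\mathfrak{h}_{\sigma(i)}$ and $\phi(y)\in\mathfrak{h}_{\sigma(j)}$ with $\sigma(i)\neq\sigma(j)$, orthogonality of the simple factors with respect to any bi-invariant metric (Remark \ref{semisimport}) gives $\langle\phi(x),\phi(y)\rangle=0$. Hence $\phi^{*}\langle,\rangle$ stays diagonal in the factor decomposition, and it remains only to identify the coefficient on each block. For the diagonal block the key fact is that \emph{a Lie algebra isomorphism preserves the Killing form}: for $x,y\in\mathfrak{h}_{i}$ one has $ad(\phi_{i}(x))=\phi_{i}\circ ad(x)\circ\phi_{i}^{-1}$, so invariance of the trace under conjugation yields $B_{\sigma(i)}(\phi_{i}(x),\phi_{i}(y))=B_{i}(x,y)$, i.e. $\phi_{i}^{*}(-B_{\sigma(i)})=-B_{i}$ with no scaling factor. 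Therefore $(\phi^{*}\langle,\rangle)(x,y)=\alpha_{\sigma(i)}(-B_{\sigma(i)})(\phi_{i}(x),\phi_{i}(y))=\alpha_{\sigma(i)}(-B_{i})(x,y)$, so $\phi^{*}\langle,\rangle=\sum_{i}\alpha_{\sigma(i)}(-B_{i})$. Under the identification $\widetilde{\mathfrak{BI}}(\mathfrak{g})\cong\{(\alpha_{1},\dots,\alpha_{k})\}$ this reads $(\alpha_{1},\dots,\alpha_{k})\mapsto(\alpha_{\sigma(1)},\dots,\alpha_{\sigma(k)})$, which is precisely the defined $S_{k}$ action and depends only on $\sigma$.

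I expect the main obstacle to be pinning down that the block scaling factor is exactly $1$ and not merely some positive constant. A priori, Lemma \ref{bisim} tells us only that $\phi_{i}^{*}(-B_{\sigma(i)})$ is \emph{a} positive multiple of $-B_{i}$; a nontrivial multiple would replace the clean permutation action by a twisted one and break the theorem. The conjugation-and-trace argument above is exactly what forces the multiple to be $1$, so this is the step deserving the most care. (Note also that this is consistent with $Inn(\mathfrak{g})$ acting trivially, the case $\sigma=\mathrm{id}$.)

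To conclude that the action of $Aut(\mathfrak{g})$ \emph{coincides} with the full symmetric group action, I would finally check that $T:Aut(\mathfrak{g})\to S_{k}$ is surjective. Fixing identifications $\mathfrak{h}_{i}\cong Lie(H)$ for each $i$, the block permutation that carries the $i$-th summand isomorphically onto the $\sigma(i)$-th summand is an automorphism of $\mathfrak{g}$ with $T$-image $\sigma$, so every $\sigma\in S_{k}$ is realized. Combined with the computation above, this shows that the orbit structure of $Aut(\mathfrak{g})$ on $\widetilde{\mathfrak{BI}}(\mathfrak{g})$ is exactly that of $S_{k}$ acting on $(\mathbb{R}^{+})^{k}$ by permutation of coordinates, as claimed.
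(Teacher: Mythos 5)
Your proof is correct, and while its skeleton matches the paper's (decompose $\phi$ into a permutation $\sigma$ plus isomorphisms $\phi_{i}:\mathfrak{h}_{i}\to\mathfrak{h}_{\sigma(i)}$, kill the mixed terms by the orthogonality of the simple factors from Remark \ref{semisimport}, then identify the coefficient on each block), you justify the crucial step differently. The paper disposes of the block coefficient by citing Lemma \ref{compsimpinv}, i.e.\ the triviality of the action of $Aut(\mathfrak{h})$ on $\widetilde{\mathfrak{BI}}(\mathfrak{h})$, whose proof rests on the finiteness of $Out(\mathfrak{h})$ and the absence of nontrivial finite subgroups of $\mathbb{R}^{+}$; strictly speaking that lemma concerns automorphisms of a single copy, so applying it to the isomorphisms $\phi_{i}$ between distinct summands requires fixing identifications $\mathfrak{h}_{i}\cong Lie(H)$. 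You instead prove directly that any Lie algebra isomorphism preserves the Killing form, via $ad(\phi_{i}(x))=\phi_{i}\circ ad(x)\circ\phi_{i}^{-1}$ and invariance of the trace under conjugation. This is more elementary (no input about $Out$), handles maps between different summands without any identification, and makes transparent why the scaling factor is exactly $1$ rather than an arbitrary positive constant. You also verify surjectivity of $T:Aut(\mathfrak{g})\to S_{k}$ by exhibiting block-permutation automorphisms, a point the paper leaves implicit but which is genuinely needed to claim that the action \emph{coincides} with that of all of $S_{k}$ rather than a subgroup.
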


\begin{proof}

Let $\phi\in Aut(\mathfrak{g})$ and let us see how this automorphism acts on a bi-invariant metric $\langle\cdot ,\cdot \rangle_{0}\in\widetilde{\mathfrak{BI}}(\mathfrak{g})$. We know that $\langle \cdot,\cdot \rangle_{0}=\alpha_{1}(-B)+\cdot\cdot\cdot+\alpha_{k}(-B)$, where $B$ is the Killing form on $H$ and each $\alpha_{i}>0$. Using Observation \ref{semisimport} we know that the decomposition $\mathfrak{h}\oplus\cdot\cdot\cdot\oplus\mathfrak{h}$ is orthogonal for any bi-invariant metric. From this we obtain

$$
\begin{array}{ll}
\phi^{*}(\langle \cdot,\cdot\rangle_{0})(\sum_{i}^{k}x_{i},\sum_{i}^{k}y_{i})&=\sum_{i}^{k}\langle \phi_{i}(x_{i}),\phi_{i}(y_{i})\rangle _{0}\\
             &=\sum_{i}^{k}\alpha_{\sigma(i)}(-B)(\phi_{i}(x_{i}),\phi_{i}(y_{i})).
\end{array}
$$

But as we saw in Lemma \ref{compsimpinv} the action of the automorphism group of a compact and simple group on the space $\widetilde{\mathfrak{BI}}(\mathfrak{h})$ is trivial. With this we have that

$$
\begin{array}{ll}
  \phi^{*}(\langle \cdot,\cdot\rangle_{0})(\sum_{i}^{k}x_{i},\sum_{i}^{k}y_{i})&=\sum_{i}^{k}\alpha_{\sigma(i)}(-B)(x_{i},y_{i})\\
 &=\sum_{i}^{k}\langle x_{\sigma(i)},  y_{\sigma(i)}\rangle_{0}.
\end{array}
$$

Thus we have that the action of $Aut(\mathfrak{g})$ on $\widetilde{\mathfrak{BI}}(\mathfrak{g})$ is given by the action of the symmetric group $S_{k}.$
    
\end{proof}

\begin{Def}

We consider the action of the symmetric group of permutations $S_{n}$ on $X^{n}=X\times\cdots\times X$, where $X$ is a topological space, given by $$(\sigma,(x_{1},...,x_{n}))\mapsto(x_{\sigma(1)},...,x_{\sigma(n)}).$$ We define the $n$th symmetric product of $X$ as $SP^{n}(X)=X^{n}/S_{n}.$
    
\end{Def}

By \cite{Adem}, when $M$ is a differentiable manifold, then $SP^{n}(M)$ has an orbifold structure, and if $M=\mathbb{R}$ then $SP^{n}(\mathbb{R})$ is homeomorphic to the product $\mathbb{R}\times(\mathbb{R}^{+}\cup\{0\})^{n-1}.$

\begin{Thm}

If $H$ is a compact, connected, simple Lie group and $G$ is the product of $H$ $k$-copies of $H$, i.e. $G=H\times\cdots\times H$, then the moduli space $\mathfrak{BI}(\mathfrak{g})$ is homeomorphic to the $k$th symmetric product of $\mathbb{R}$, $SP^{k}(\mathbb{R})\cong \mathbb{R}\times(\mathbb{R}^{+}\cup{0})^{k-1}$.
    
\end{Thm}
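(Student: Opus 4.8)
The plan is to assemble the results already established into an explicit description of the orbit space $\mathfrak{BI}(\mathfrak{g})=\widetilde{\mathfrak{BI}}(\mathfrak{g})/Aut(\mathfrak{g})$, and then to recognize that orbit space as a symmetric product of $\mathbb{R}$ after a harmless change of coordinates. First I would apply Theorem \ref{bitilde}: since $\mathfrak{g}=\mathfrak{h}_{1}\oplus\cdots\oplus\mathfrak{h}_{k}$ is semisimple with each $\mathfrak{h}_{i}\cong \mathrm{Lie}(H)$ simple, the map $\alpha_{1}(-B)+\cdots+\alpha_{k}(-B)\mapsto(\alpha_{1},\ldots,\alpha_{k})$ identifies $\widetilde{\mathfrak{BI}}(\mathfrak{g})$ with $(\mathbb{R}^{+})^{k}$. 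This is a homeomorphism onto its image because it is the restriction of a linear isomorphism from the ambient matrix space onto $\mathbb{R}^{k}$, so the subspace topology on $\widetilde{\mathfrak{BI}}(\mathfrak{g})$ matches the standard topology on $(\mathbb{R}^{+})^{k}$.

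Next I would invoke the preceding theorem, which shows that the action of $Aut(\mathfrak{g})$ on $\widetilde{\mathfrak{BI}}(\mathfrak{g})$ coincides with the standard permutation action of $S_{k}$ on the coordinates $(\alpha_{1},\ldots,\alpha_{k})$. Concretely, the homomorphism $T:Aut(\mathfrak{g})\rightarrow S_{k}$ detects which factor each $\mathfrak{h}_{i}$ is sent to, the triviality of the simple-factor action (Lemma \ref{compsimpinv}) kills all remaining scaling, and so the $Aut(\mathfrak{g})$-orbits are exactly the $S_{k}$-orbits. Consequently, as topological spaces with the quotient topology,
$$\mathfrak{BI}(\mathfrak{g})=\widetilde{\mathfrak{BI}}(\mathfrak{g})/Aut(\mathfrak{g})=(\mathbb{R}^{+})^{k}/S_{k}=SP^{k}(\mathbb{R}^{+}).$$

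The final step is to pass from $SP^{k}(\mathbb{R}^{+})$ to $SP^{k}(\mathbb{R})$. For this I would use the homeomorphism $\log:\mathbb{R}^{+}\rightarrow\mathbb{R}$. Applied coordinatewise, $(\log,\ldots,\log):(\mathbb{R}^{+})^{k}\rightarrow\mathbb{R}^{k}$ is a homeomorphism that commutes with the permutation action of $S_{k}$, hence it descends to a continuous bijection $SP^{k}(\mathbb{R}^{+})\rightarrow SP^{k}(\mathbb{R})$ whose inverse, induced by $(\exp,\ldots,\exp)$, is also continuous. Therefore $\mathfrak{BI}(\mathfrak{g})\cong SP^{k}(\mathbb{R})$. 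I would then quote the result from \cite{Adem} recorded immediately before the statement, namely $SP^{k}(\mathbb{R})\cong\mathbb{R}\times(\mathbb{R}^{+}\cup\{0\})^{k-1}$, to obtain the claimed homeomorphism.

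The only genuinely nontrivial point is the compatibility of the equivariant homeomorphism with the two quotient topologies: one must check that an $S_{k}$-equivariant homeomorphism between $(\mathbb{R}^{+})^{k}$ and $\mathbb{R}^{k}$ induces a homeomorphism of orbit spaces. I expect this to be the main obstacle, though it is standard rather than deep: it is the functoriality of the symmetric product with respect to homeomorphisms, and it follows from the universal property of the quotient topology, since both the induced map and its inverse become continuous after precomposition with the (open) quotient projections. The rest of the argument is bookkeeping that relies entirely on the identifications proved earlier in this section.
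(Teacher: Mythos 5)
Your proposal is correct and follows essentially the same route as the paper: identify $\widetilde{\mathfrak{BI}}(\mathfrak{g})$ with $(\mathbb{R}^{+})^{k}$ via Theorem \ref{bitilde}, reduce the $Aut(\mathfrak{g})$-action to the $S_{k}$-permutation action, and pass to the quotient $SP^{k}(\mathbb{R})$. The only difference is that you explicitly justify the step $(\mathbb{R}^{+})^{k}/S_{k}\cong\mathbb{R}^{k}/S_{k}$ via the $S_{k}$-equivariant coordinatewise $\log$ homeomorphism, a detail the paper's proof leaves implicit.
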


\begin{proof}

Let $\langle\cdot,\cdot\rangle=\alpha_{1}(-B)+...+\alpha_{k}(-B)\in\widetilde{\mathfrak{BI}}(\mathfrak{g})$, $x=\sum _{i}^{k}x_{i},y=\sum_{i}^{k}y_{i}\in\mathfrak{g}$ and $\sigma\in S_{k}$. We compute:

$$
\begin{array}{ll}
\langle \sum_{i}^{k} x_{\sigma(i)},\sum_{i}^{k}y_{\sigma(i)} \rangle&=\sum_{i}^{k}\alpha_{i}(-B(x_{\sigma(i)},y_{\sigma(i)}))\\
&=\sum_{i}^{k}\alpha_{\sigma(i)}(-B)(x_{i},y_{i}).
\end{array}
$$

With this we conclude that the isometry class of a bi-invariant metric is the set $[(\alpha_{1},...,\alpha_{k})]=\{(\alpha_{\sigma(1)},...,\alpha_{\sigma(k)})|\sigma\in S_{k} \}$. It follows that $$\mathfrak{BI}(\mathfrak{g})\cong(\mathbb{R}^{+})^{k}/S_{k}\cong\mathbb{R}^{k}/S_{k}=SP^{k}(\mathbb{R})\cong\mathbb{R}\times(\mathbb{R}^{+}\cup\{0\})^{k-1}.$$

\end{proof}

\begin{Rem}

In this case, $\mathfrak{BJ}(\mathfrak{g})$ is homeomorphic to $\mathbb{R}\times(\mathbb{R}^{+}\cup\{0\})^{k-1}$, which is homotopic to a point and thus contractible.
    
\end{Rem}

\begin{Rem}

In the case of $\mathfrak{EBI}(\mathfrak{g})$, recall that by Remark \ref{acom} we have that the actions of $Aut(\mathfrak{g})$ and $\mathbb{R}^{\times}$ on $\widetilde{\mathfrak{BI}}(\mathfrak{g})$ commute, so we can first consider the space $\widetilde{\mathfrak{BI}}(\mathfrak{g})/\mathbb{R}^{\times}$ and later factor it with $Aut(\mathfrak{g})$. Since $\widetilde{\mathfrak{BI}}(\mathfrak{g})$ is formed by vectors of strictly positive $k$ real numbers, we can think of the space $\widetilde{\mathfrak{BI}}(\mathfrak{g})/\mathbb{R}^{\times}$ as $\mathbb{S}^{k-1}_{+}=\mathbb{S}^{k-1}\cap\widetilde{\mathfrak{BI}}(\mathfrak{g})$, and in this way $\mathfrak{EBI}(\mathfrak{g})=\mathbb{S}^{k-1}_{+}/Aut(\mathfrak{g})$.
    
\end{Rem}

Having said this, we can state the following corollary.

\begin{Coro}
If $H$ is a compact, connected, simple Lie group, and $G$ is the product of $H$ $k$-copies of $H$ i.e. $G=H\times\cdots\times H$, then $\mathfrak{EBI}(\mathfrak{g})=\mathbb{S}^{k-1}_{+}/S_{k}$.

\end{Coro}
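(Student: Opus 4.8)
The plan is to deduce this directly from the preceding Remark, which already reduces $\mathfrak{EBI}(\mathfrak{g})$ to the quotient $\mathbb{S}^{k-1}_{+}/Aut(\mathfrak{g})$, together with the previous Theorem that identifies the $Aut(\mathfrak{g})$-action on $\widetilde{\mathfrak{BI}}(\mathfrak{g})$ with the coordinate-permutation action of $S_{k}$. In essence the corollary is a restatement of that Remark with the abstract $Aut(\mathfrak{g})$-action replaced by its concrete description as $S_{k}$.

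First I would recall that by definition $\mathfrak{EBI}(\mathfrak{g})$ is the orbit space $\widetilde{\mathfrak{BI}}(\mathfrak{g})/\mathbb{R}^{\times}Aut(\mathfrak{g})$, and that by Remark \ref{acom} the $\mathbb{R}^{\times}$- and $Aut(\mathfrak{g})$-actions commute. This legitimizes performing the quotient in two stages: quotient first by $\mathbb{R}^{\times}$ and then by $Aut(\mathfrak{g})$. Under the identification of $\widetilde{\mathfrak{BI}}(\mathfrak{g})$ with $(\mathbb{R}^{+})^{k}$ coming from Theorem \ref{bitilde}, the $\mathbb{R}^{\times}$-action is multiplication of the vector $(\alpha_{1},\ldots,\alpha_{k})$ by a positive constant, so the first-stage quotient $\widetilde{\mathfrak{BI}}(\mathfrak{g})/\mathbb{R}^{\times}$ is the space of positive rays, namely $\mathbb{S}^{k-1}_{+}=\mathbb{S}^{k-1}\cap\widetilde{\mathfrak{BI}}(\mathfrak{g})$, exactly as recorded in the preceding Remark.

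Second I would invoke the previous Theorem, according to which the residual $Aut(\mathfrak{g})$-action on $\widetilde{\mathfrak{BI}}(\mathfrak{g})\cong(\mathbb{R}^{+})^{k}$ is precisely the $S_{k}$-action permuting the coordinates $(\alpha_{1},\ldots,\alpha_{k})$. Since a permutation of coordinates is an orthogonal map that also preserves positivity of every entry, it preserves both the Euclidean norm and the open positive orthant; hence it restricts to $\mathbb{S}^{k-1}_{+}$ and descends to the first-stage quotient. Taking the second-stage quotient therefore gives $\mathbb{S}^{k-1}_{+}/S_{k}$, and combining the two stages yields $\mathfrak{EBI}(\mathfrak{g})=\mathbb{S}^{k-1}_{+}/S_{k}$.

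The only points requiring any care -- and the closest thing to an obstacle here -- are checking that the staged quotient is valid (guaranteed by the commutativity in Remark \ref{acom}) and that the $S_{k}$-action genuinely preserves the \emph{positive} hemisphere $\mathbb{S}^{k-1}_{+}$ rather than merely the ambient sphere. Both are immediate once the identifications above are in place, so I expect no genuine difficulty; the substance of the result was already carried by Theorem \ref{bitilde}, the identification of $Aut(\mathfrak{g})$ with $S_{k}$, and the preceding Remark.
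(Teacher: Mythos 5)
Your argument is correct and matches the paper's (implicit) reasoning exactly: the paper derives this corollary directly from the preceding Remark, which reduces $\mathfrak{EBI}(\mathfrak{g})$ to $\mathbb{S}^{k-1}_{+}/Aut(\mathfrak{g})$ via the commuting actions of Remark~\ref{acom}, combined with the theorem identifying the $Aut(\mathfrak{g})$-action with the $S_{k}$ coordinate permutation. Your additional check that permutations preserve the positive hemisphere is a worthwhile detail the paper leaves unstated.
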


\begin{Rem}

We consider the homotopy $H:\mathbb{S}^{k-1}_{+}\times I\rightarrow\mathbb{S}^{k-1}_{+}$ given by $$H((x_{1},...,x_{k}),t)=\frac{(1-t)(x_{1},...,x_{k})+t(\frac{1}{\sqrt{k}})(1,...,1)}{||(1-t)(x_{1},...,x_{k})+t(\frac{1}{\sqrt{k}})(1,...,1) ||},$$  where $H(x,0)=x$ and $H(x,1)=\left(\frac{1}{\sqrt{k}}\right)(1,...,1)\in\mathbb{S}^{k-1}_{+}$, then $\mathbb{S}^{k-1}_{+}$ is homotopic to the point $\left(\frac{1}{\sqrt{k}}\right)(1,...,1).$ Now, we define $\tilde{H}:(\mathbb{S}^{k-1}_{+}/S{k})\times I\rightarrow(\mathbb{S}^{k-1}_{+}/S{k})$ given by $\tilde{H}([x_{1},...,x_{k}],t)=[H((x_{1},...,x_{k}),t)]$. It is easy to see that $\tilde{H}$ is well-defined and is a retraction from $(\mathbb{S}^{k-1}_{+}/S^{k})$ to $[(\frac{1}{\sqrt{k}})(1,...,1)]$. Therefore, $\mathfrak{EBI}(\mathfrak{g})$ is contractible.
    
\end{Rem}

\subsection{General Case}

\begin{Thm}

Let $G$ be a connected, compact, and semi-simple Lie group with $\mathfrak{g}\cong\mathfrak{s}\oplus\mathfrak{b}_{1}\oplus\cdots\oplus\mathfrak{b}_{l}$ where $\mathfrak{s}\cong\mathfrak{a}_{1}\oplus\cdots\oplus\mathfrak{a}_{k}$ is semi-simple with non-isomorphic factors, and each $\mathfrak{b}_{i}$ is semi-simple and decomposes as the sum of $m_{i}$ isomorphic factors but none of these factors is isomorphic to any $\mathfrak{a}_{j}$. Then $\mathfrak{BI}(\mathfrak{g})$ is homeomorphic to
$$\widetilde{\mathfrak{BI}}(\mathfrak{s})\times SP^{m_{1}}(\mathbb{R})\times\cdots\times SP^{m_{l}}(\mathbb{R}).$$
    
\end{Thm}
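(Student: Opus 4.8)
The plan is to assemble the result from the two special cases already established (a semisimple algebra with pairwise non-isomorphic simple factors, where $Aut$ acts trivially, and the product $H\times\cdots\times H$ of $m$ copies of a single simple group, where $Aut$ acts through $S_m$) by verifying that the $Aut(\mathfrak{g})$-action on $\widetilde{\mathfrak{BI}}(\mathfrak{g})$ splits as a product of actions along the isotypic blocks $\mathfrak{s},\mathfrak{b}_1,\dots,\mathfrak{b}_l$.

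First I would establish the block structure of the automorphism group. By Corollary \ref{semij} every $\phi\in Aut(\mathfrak{g})$ sends each simple summand to an isomorphic simple summand. Since the factors of $\mathfrak{s}$ are pairwise non-isomorphic and no factor of any $\mathfrak{b}_i$ is isomorphic to a factor of $\mathfrak{s}$ or to a factor of $\mathfrak{b}_j$ for $j\neq i$, the isomorphism classes of simple factors are exactly $\{\mathfrak{a}_1\},\dots,\{\mathfrak{a}_k\}$ together with the class of the $m_i$ mutually isomorphic factors of each $\mathfrak{b}_i$. Consequently $\phi$ must fix each $\mathfrak{a}_j$ and must preserve each $\mathfrak{b}_i$ (permuting its isomorphic factors), so $\phi$ restricts to an automorphism of $\mathfrak{s}$ and of each $\mathfrak{b}_i$. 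Because the blocks are ideals with no cross-brackets, these restrictions glue back freely, giving the direct-product decomposition $Aut(\mathfrak{g})\cong Aut(\mathfrak{s})\times Aut(\mathfrak{b}_1)\times\cdots\times Aut(\mathfrak{b}_l)$.

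Next, by Theorem \ref{bitilde} and the orthogonality of the simple-factor decomposition (Remark \ref{semisimport}), $\widetilde{\mathfrak{BI}}(\mathfrak{g})$ splits as the product $\widetilde{\mathfrak{BI}}(\mathfrak{s})\times\widetilde{\mathfrak{BI}}(\mathfrak{b}_1)\times\cdots\times\widetilde{\mathfrak{BI}}(\mathfrak{b}_l)$, and under the block decomposition of $Aut(\mathfrak{g})$ the action respects this product: each factor $Aut(\mathfrak{b}_i)$ acts only on $\widetilde{\mathfrak{BI}}(\mathfrak{b}_i)$ and trivially elsewhere, and likewise for $Aut(\mathfrak{s})$. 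Invoking the elementary fact that for a product group acting factorwise on a product space the orbit space is the product of the orbit spaces, I would conclude
\[
\mathfrak{BI}(\mathfrak{g})=\widetilde{\mathfrak{BI}}(\mathfrak{g})/Aut(\mathfrak{g})\cong\big(\widetilde{\mathfrak{BI}}(\mathfrak{s})/Aut(\mathfrak{s})\big)\times\prod_{i=1}^{l}\big(\widetilde{\mathfrak{BI}}(\mathfrak{b}_i)/Aut(\mathfrak{b}_i)\big).
\]
Finally I would evaluate each factor: since $\mathfrak{s}$ has non-isomorphic factors its action is trivial, so the first factor is $\widetilde{\mathfrak{BI}}(\mathfrak{s})$; and since each $\mathfrak{b}_i$ is a product of $m_i$ isomorphic simple groups, the theorem for products of copies of a simple group identifies $\widetilde{\mathfrak{BI}}(\mathfrak{b}_i)/Aut(\mathfrak{b}_i)\cong(\mathbb{R}^{+})^{m_i}/S_{m_i}\cong SP^{m_i}(\mathbb{R})$, yielding the claimed homeomorphism.

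The main obstacle will be the topological step — upgrading a set-theoretic bijection to a genuine homeomorphism between $\widetilde{\mathfrak{BI}}(\mathfrak{g})/Aut(\mathfrak{g})$ and the product of quotients. This hinges on confirming that the $Aut(\mathfrak{g})$-action genuinely decomposes as a product action with respect to the relative topology on $\widetilde{\mathfrak{BI}}(\mathfrak{g})$; once the block decomposition of $Aut(\mathfrak{g})$ and the product decomposition of $\widetilde{\mathfrak{BI}}(\mathfrak{g})$ are shown to be compatible, the commuting-quotient lemma for product actions delivers the homeomorphism directly.
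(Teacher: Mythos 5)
Your proposal is correct and follows essentially the same route as the paper: decompose $Aut(\mathfrak{g})$ along the isotypic blocks via Corollary \ref{semij}, observe that the action is trivial on $\widetilde{\mathfrak{BI}}(\mathfrak{s})$ and factors through $S_{m_i}$ on each $\widetilde{\mathfrak{BI}}(\mathfrak{b}_i)$, and pass to the product of orbit spaces. You are in fact more careful than the paper's terse proof about justifying the block decomposition of the automorphism group and the homeomorphism (rather than mere bijection) between the quotient of the product and the product of the quotients.
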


\begin{proof}
Let $\phi \in Aut(\mathfrak{g})$. By Corollary \ref{semij}, we have $\phi=\psi+\varphi_{1}+\cdots+\varphi_{l}$ where $\psi \in Aut(\mathfrak{s})$ and each $\varphi_{i} \in Aut(\mathfrak{b}_{i})$. We know that the action of $\psi$ on $\widetilde{\mathfrak{BI}}(\mathfrak{s})$ is trivial and the action of $\varphi_{i}$ on $\widetilde{\mathfrak{BI}}(\mathfrak{b}_{i})$ is through a permutation $\sigma \in S_{m_{i}}$. Therefore, upon factoring $\widetilde{\mathfrak{BI}}(\mathfrak{g})$ with $Aut(\mathfrak{g})$, we have that $\mathfrak{BI}(\mathfrak{g})$ is homeomorphic to the product $\widetilde{\mathfrak{BI}}(\mathfrak{s})\times SP^{m_{1}}(\mathbb{R})\times\cdots\times SP^{m_{l}}(\mathbb{R})$.
\end{proof}

And for the space $\mathfrak{EBI}(\mathfrak{g})$ we have the following theorem.

\begin{Thm}

Let $G$ be a connected, compact, and semi-simple Lie group with $\mathfrak{g}\cong\mathfrak{s}\oplus\mathfrak{b}_{1}\oplus\cdots\oplus\mathfrak{b}_{l}$, where $\mathfrak{s}\cong\mathfrak{a}_{1}\oplus\cdots\oplus\mathfrak{a}_{k}$ is semi-simple with non-isomorphic factors and each $\mathfrak{b}_{i}$ is semi-simple and decomposes into a sum of $m_{i}$ isomorphic factors, but none of these factors are isomorphic to any $\mathfrak{a}_{j}$. Then $\mathfrak{EBI}(\mathfrak{g})$ is homeomorphic to $$\mathfrak{EBI}(\mathfrak{s})\times(\mathbb{S}^{m{1}-1}_{+}/S_{m_{1}})\times\cdots\times(\mathbb{S}^{m_{l}-1}_{+}/S_{m_{l}}).$$
    
\end{Thm}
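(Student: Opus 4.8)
The plan is to mirror the strategy used for the preceding theorem on $\mathfrak{BI}(\mathfrak{g})$ and then fold in the scalar action, exploiting that the two quotients commute. Recall from Remark \ref{acom} that the actions of $Aut(\mathfrak{g})$ and of the positive scalars $\mathbb{R}^{\times}$ on $\widetilde{\mathfrak{BI}}(\mathfrak{g})$ commute, so that $\mathfrak{EBI}(\mathfrak{g})=\widetilde{\mathfrak{BI}}(\mathfrak{g})/\mathbb{R}^{\times}Aut(\mathfrak{g})$ may be formed by quotienting in either order. First I would fix an arbitrary $\phi\in Aut(\mathfrak{g})$ and, using Corollary \ref{semij} together with the hypothesis that no factor of any $\mathfrak{b}_{i}$ is isomorphic to a factor of $\mathfrak{s}$ or of another block, write $\phi=\psi+\varphi_{1}+\cdots+\varphi_{l}$ with $\psi\in Aut(\mathfrak{s})$ and each $\varphi_{i}\in Aut(\mathfrak{b}_{i})$. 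By Theorem \ref{bitilde} I would identify $\widetilde{\mathfrak{BI}}(\mathfrak{g})$ with $\widetilde{\mathfrak{BI}}(\mathfrak{s})\times\widetilde{\mathfrak{BI}}(\mathfrak{b}_{1})\times\cdots\times\widetilde{\mathfrak{BI}}(\mathfrak{b}_{l})$, where each block is a tuple of strictly positive Killing-form coefficients.

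Second, I would record how each piece of $\phi$ acts on this product. Since the factors of $\mathfrak{s}$ are pairwise non-isomorphic, $\psi$ fixes each $\mathfrak{a}_{j}$ and, by Lemma \ref{compsimpinv}, acts trivially on the coordinates of $\widetilde{\mathfrak{BI}}(\mathfrak{s})$. Since each $\mathfrak{b}_{i}$ is a sum of $m_{i}$ mutually isomorphic simple factors, the results already established show that $\varphi_{i}$ acts on $\widetilde{\mathfrak{BI}}(\mathfrak{b}_{i})$ through a permutation $\sigma_{i}\in S_{m_{i}}$. Hence the $Aut(\mathfrak{g})$-action factors through $S_{m_{1}}\times\cdots\times S_{m_{l}}$ acting blockwise, and quotienting by it reproduces exactly the product decomposition of $\mathfrak{BI}(\mathfrak{g})$ from the preceding theorem. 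The remaining task is then to pass to the scalar quotient of this product.

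Third, I would projectivize. On the $\mathfrak{s}$-block the automorphism action is trivial, so dividing by the scalars returns $\widetilde{\mathfrak{BI}}(\mathfrak{s})/\mathbb{R}^{\times}=\mathfrak{EBI}(\mathfrak{s})$; on each $\mathfrak{b}_{i}$-block, using that all coordinates are strictly positive, the scalar quotient of $\widetilde{\mathfrak{BI}}(\mathfrak{b}_{i})$ is the open positive sphere $\mathbb{S}^{m_{i}-1}_{+}$, and dividing further by $S_{m_{i}}$ yields $\mathbb{S}^{m_{i}-1}_{+}/S_{m_{i}}$ exactly as in the corollary computing $\mathfrak{EBI}$ for $G=H\times\cdots\times H$. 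Assembling the blockwise projectivizations would then give the asserted homeomorphism onto $\mathfrak{EBI}(\mathfrak{s})\times(\mathbb{S}^{m_{1}-1}_{+}/S_{m_{1}})\times\cdots\times(\mathbb{S}^{m_{l}-1}_{+}/S_{m_{l}})$, and I would check continuity of this identification and of its inverse in the quotient topology, using that each factor quotient is open and that the permutation groups act on disjoint blocks.

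The hard part will be precisely this last step, because $\mathbb{R}^{\times}$ acts by a \emph{single} global scalar on $\widetilde{\mathfrak{BI}}(\mathfrak{g})$ rather than by an independent scalar on each block, so one cannot simply projectivize the factors in isolation. The crux of the argument is therefore to justify, using the commutativity of the two actions from Remark \ref{acom}, that the diagonal scaling can be disentangled compatibly with the product structure of $\mathfrak{BI}(\mathfrak{g})$—for instance by selecting within each isometry class a representative that is normalized blockwise—and to verify that the resulting map is a homeomorphism onto the displayed product. I expect essentially all the genuine difficulty to sit in reconciling the one global scalar with the blockwise projectivizations, and I would treat that reconciliation as the heart of the proof.
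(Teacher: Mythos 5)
Your closing paragraph puts its finger on the right spot, but what you flag there is not a technical hurdle to be overcome by a cleverer normalization: the step cannot be completed, because the statement as printed is false, and your outline would (correctly) fail exactly there. Count dimensions. Writing $N=k+m_{1}+\cdots+m_{l}$, your first two steps correctly identify $\widetilde{\mathfrak{BI}}(\mathfrak{g})\cong(\mathbb{R}^{+})^{N}$, with $Aut(\mathfrak{g})$ acting through $S_{m_{1}}\times\cdots\times S_{m_{l}}$ on the repeated blocks and trivially on the $\mathfrak{s}$-block, and with $\mathbb{R}^{\times}$ acting by a \emph{single} diagonal scalar. Hence $\mathfrak{EBI}(\mathfrak{g})\cong\mathbb{S}^{N-1}_{+}/(S_{m_{1}}\times\cdots\times S_{m_{l}})$, which has dimension $N-1$, since quotients by finite groups preserve dimension. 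The product claimed in the theorem projectivizes every block separately, so its dimension is $(k-1)+\sum_{i}(m_{i}-1)=N-1-l$; for $l\geq 1$ these differ, so by invariance of dimension no homeomorphism can exist. A concrete counterexample is $\mathfrak{g}=\mathfrak{su}(3)\oplus\mathfrak{su}(2)\oplus\mathfrak{su}(2)$: using the global scalar to normalize the $\mathfrak{su}(3)$ coefficient to $1$ gives $\mathfrak{EBI}(\mathfrak{g})\cong(\mathbb{R}^{+})^{2}/S_{2}$, a $2$-dimensional space, whereas the claimed $\mathfrak{EBI}(\mathfrak{su}(3))\times(\mathbb{S}^{1}_{+}/S_{2})$ is a point times an arc, of dimension $1$. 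Your proposed repair, selecting a representative ``normalized blockwise,'' is precisely where the error hides: normalizing each of the $l+1$ blocks separately would require $l+1$ independent scalars, but only one global scalar is available, so blockwise projectivization quotients by strictly more than the group $\mathbb{R}^{\times}Aut(\mathfrak{g})$ actually present.

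For context, the paper states this theorem with no proof at all, so there is no argument of the author's to compare against; what your honest flagging of the ``hard part'' has actually uncovered is an error in the paper. Moreover, everything else in your outline (the splitting of $\phi$ via Corollary \ref{semij}, triviality of the action on $\widetilde{\mathfrak{BI}}(\mathfrak{s})$, the permutation action on each $\widetilde{\mathfrak{BI}}(\mathfrak{b}_{i})$, and the commutativity from Remark \ref{acom}) is sound and suffices to prove the corrected statement, in which the scalar is quotiented out only once: in general $\mathfrak{EBI}(\mathfrak{g})\cong\mathbb{S}^{N-1}_{+}/(S_{m_{1}}\times\cdots\times S_{m_{l}})$, and when $k\geq 1$ one may instead use the scalar to normalize one $\mathfrak{s}$-coordinate, obtaining $\mathfrak{EBI}(\mathfrak{g})\cong\mathfrak{EBI}(\mathfrak{s})\times SP^{m_{1}}(\mathbb{R})\times\cdots\times SP^{m_{l}}(\mathbb{R})$, where the repeated blocks contribute full symmetric products rather than projectivized ones. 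Either form is still a contractible space, so the paper's subsequent contractibility remark survives the correction.
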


In both cases, the spaces $\mathfrak{BI}(\mathfrak{g})$ and $\mathfrak{EBI}(\mathfrak{g})$ are products of contractible spaces and therefore they are contractible as well. Also note that for a Lie group $G$  admitting a bi-invariant metric, the description of  these spaces depends on the decomposition of the Lie algebra into semi-simple components, and in particular on the number of simple components which are isomorphic.

\section{Examples}\label{Section 4}

In this section we compute explicitly the moduli space $\mathfrak{BI}$ and $\mathfrak{EBI}$ for the semi-simple Lie groups of dimension at most 6. In dimension $1$, all Lie groups are abelian. In dimension $2$, there is only one non-abelian Lie algebra $\mathfrak{g}^{2}$ which has a basis $e_{1},e_{2}\in\mathfrak{g}^{2}$ where the Lie bracket is given by $[e_{1},e_{2}]=e_{2}$. It does not admit a bi-invariant metric because it does not have simple ideals. The first example of bi-invariant metrics in a non-abelian group appears in dimension $3$ with $SU(2)$.

\subsection{\texorpdfstring{$\mathfrak{BI}(\mathfrak{su}(2))$}{BI(su(2))} and \texorpdfstring{$\mathfrak{EBI}(\mathfrak{su}(2))$}{EBI(su(2))}}\label{round} In a three-dimensional Lie algebra $\mathfrak{g}^{3}$ that has a bi-invariant metric with an orthonormal basis $e_{1},e_{2},e_{3}\in\mathfrak{g}^{3}$, the structure constants satisfy $\alpha_{ijk}=-\alpha_{ikj}$. Therefore, it is easy to conclude that $[e_{1},e_{2}]=\lambda e_{3}$, $[e_{2},e_{3}]=\lambda e_{1}$, $[e_{3},e_{1}]=\lambda e_{2}$, where $\lambda>0$. We will verify that this Lie algebra is isomorphic to $\mathfrak{su}(2)$. Let $a_{1},a_{2},a_{3}\in\mathfrak{su}(2)$ be the basis that satisfies $[a_{1},a_{2}]=a_{3}$, $[a_{2},a_{3}]=a_{1}$, and $[a_{3},a_{1}]=a_{2}$. We define $\phi:\mathfrak{su}(2)\rightarrow\mathfrak{g}^{3}$ as $\phi(a_{i})=\frac{1}{\lambda}e_{i}$. Then, $\phi$ is an isomorphism of Lie algebras because $\phi([a_{i},a_{j}])=\phi(a_{k})=\frac{1}{\lambda}e_{k}$ and $[\phi(a_{i}),\phi(a_{j})]=[\frac{1}{\lambda}e_{i},\frac{1}{\lambda}e_{j}]=\lambda(\frac{1}{\lambda^{2}})e_{k}=\frac{1}{\lambda}e_{k}$. This tells us that there is only one Lie algebra with a bi-invariant metric in dimension $3$. Since $SU(2)$ is compact and simple, then $\mathfrak{BI}(\mathfrak{su}(2))=\mathbb{R}^{+}$ and $\mathfrak{EBI}(\mathfrak{su}(2))={\langle\cdot,\cdot\rangle_{0}}$.
\\

 In dimensions $4$ and $5$, all Lie algebras admitting a bi-invariant metric are either abelian or have $\mathfrak{su}(2)$ as a factor. This is because there are no semi-simple Lie algebras in dimensions $4$ and $5$. Using \ref{autbi}, we conclude that the moduli spaces $\mathfrak{BI}$ and $\mathfrak{EBI}$ are homeomorphic to $\mathfrak{BI}(\mathfrak{su}(2))$ and $\mathfrak{EBI}(\mathfrak{su}(2))$ respectively.

\subsection{\texorpdfstring{$\mathfrak{BI}(\mathfrak{so}(4))$}{BI(so(4))} and \texorpdfstring{$\mathfrak{EBI}(\mathfrak{so}(4))$}{EBI(so(4))}}

The Lie group $SO(4)$ has dimension $6$. Moreover, it is compact and semi-simple, since it satisfies $\mathfrak{so}(4)\cong\mathfrak{su}(2)\oplus\mathfrak{su}(2)$. With two summands, we have that $\widetilde{\mathfrak{BI}}(\mathfrak{so}(4))=(\mathbb{R}^{+})^{2}$. Thus, the moduli space $\mathfrak{BI}(\mathfrak{so}(4))\cong(\mathbb{R}^{+})^{2}/S_{2}$ is homeomorphic to $\mathbb{R}^{+}\times\mathbb{R}$, and $\mathfrak{EBI}(\mathfrak{so}(4))$ is homeomorphic to $\mathbb{R}^{+}$ (see figure \ref{fig:etiqueta}) This implies that
up to rescalling there is only a
one parameter group of bi-invariant
metrics on $SO(4).$ 

\begin{figure}[h]
  \centering
  \includegraphics[scale=0.12]{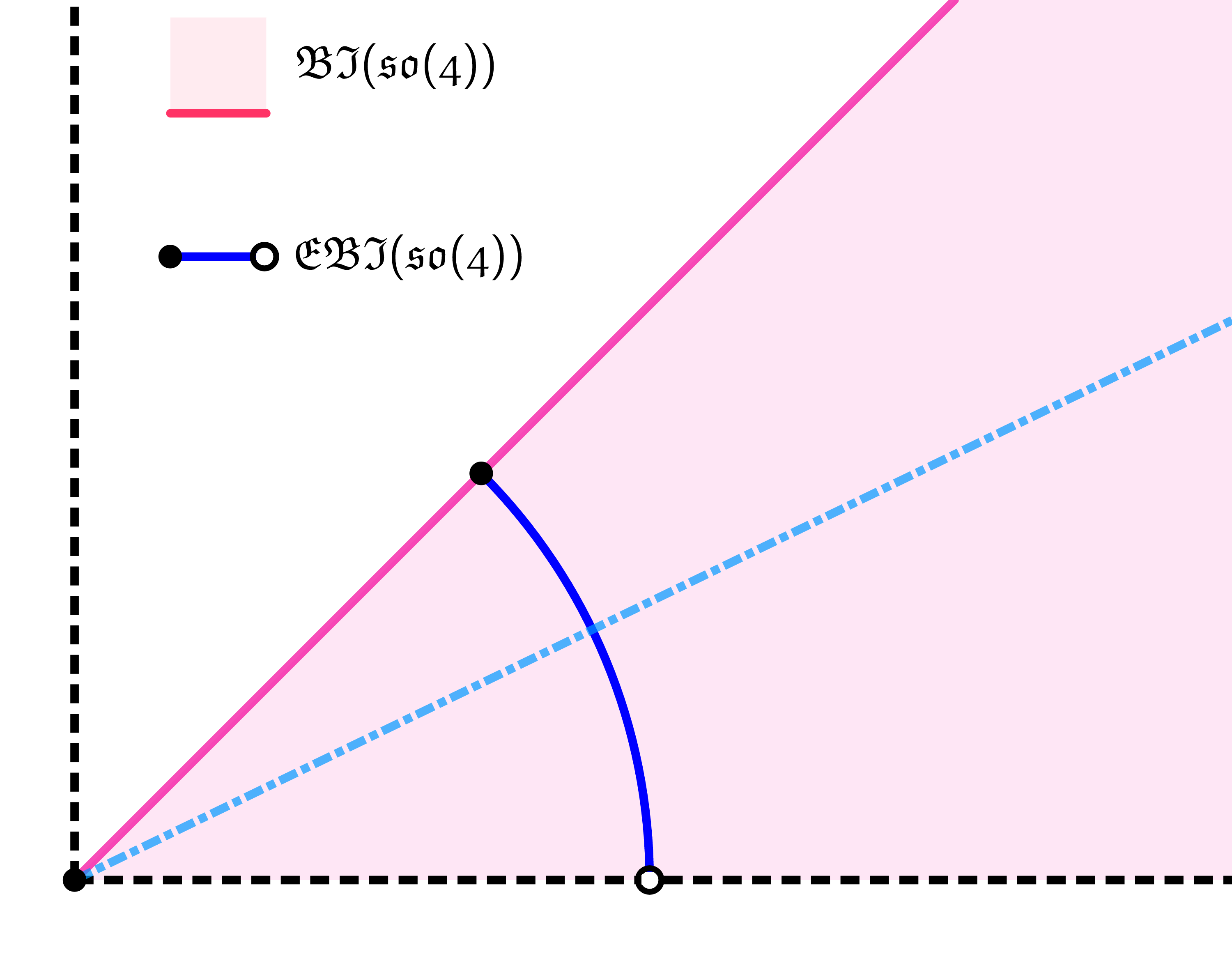}
  \caption{Since $\widetilde{\mathfrak{BI}}(\mathfrak{so}(4))=(\mathbb{R}^{+})^{2}$, each metric is represented by a pair of positive real numbers. Therefore, we can visualize the space $\widetilde{\mathfrak{BI}}(\mathfrak{so}(4))$ as the positive quadrant of $\mathbb{R}^{2}$, where each point $(\alpha_{1},\alpha_{2})$ represents a bi-invariant metric. Taking the quotient of $\widetilde{\mathfrak{BI}}(\mathfrak{so}(4))$ by $S_{2}$, each $(\alpha_{1},\alpha_{2})$ is identified with $(\alpha_{2},\alpha_{1})$. This implies that $\mathfrak{BI}(\mathfrak{so}(4))$ is the shaded area with the boundary given by the line $y=x$. The dotted line passing through the shaded area represents an element of $\mathfrak{EBI}(\mathfrak{so}(4))$, so this space is represented by the circular segment extending from the $x$-axis to the line $x=y$.}
  \label{fig:etiqueta}
\end{figure}

%Ya que $\widetilde{\mathfrak{BI}}(\mathfrak{so}(4))=(\mathbb{R}^{+})^{2}$ cada métrica se representa con una pareja de reales positivos, entonces podemos visualizar al espacio $\widetilde{\mathfrak{BI}}(\mathfrak{so}(4))$ como el cuadrante de $\mathbb{R}^{2}$ con entradas positivas, en este cuadrante cada punto $(\alpha_{1},\alpha_{2})$ representa una métrica bi-invariante y al factorizar $\widetilde{\mathfrak{BI}}(\mathfrak{so}(4))$ con $S_{2}$ identificamos. $(\alpha_{1},\alpha_{2})$ con $(\alpha_{2},\alpha_{1})$. Con esto tenemos que $\mathfrak{BI}(\mathfrak{so}(4))$ es el area sombreada que tiene como frontera la recta $y=x$. La recta punteada que atraviesa el area sombreada es un el elemento de $\mathfrak{EBI}(\mathfrak{so}(4))$ y por esto el espacio queda representado por el segmento de circunferencia que va del eje $x$ a la recta $x=y$. 

\section{Curvature}\label{Section 5}

Using the language developed so far, we can restate some results from Milnor's previous work in \cite{Milnor} as follows.

\subsection{Strictly Positive Sectional Curvature}

For metrics $\langle\cdot,\cdot\rangle\in\mathfrak{BI}(\mathfrak{g})$ with $K>0$, Wallach's Theorem \cite[Theorem 2.1]{Wallach} will be of great utility.

\begin{Wthm}\label{Wthm}

 The $3$-shpere group $SU(2)$ is the only simply connected Lie group wich admits a left invariant metric of strictly positive sectional curvature.
    
\end{Wthm}

Consequently, we have the following corollary.

\begin{Coro}\label{curvsecjuan}

If $G$ is a Lie group with a bi-invariant metric such that $K > 0$, then $G$ is compact and has dimension equal to three.

\end{Coro}

\begin{proof}

Since $G$ admits a bi-invariant metric, then $\widetilde{G}$ does as well. According to Lemma \ref{bi5}, we have $\widetilde{G}\cong H\times\mathbb{R}^{l}$ where $H$ is a compact group. If $v\in\mathfrak{g}$ is unitary and tangent to $\mathbb{R}^{l}$, it satisfies $[v,u]=0$ for every unitary $u$. Consequently by Theorem \ref{curvsecbicor}, $K(v,u)=\frac{\kappa(v,u)}{1-\langle v,u\rangle}=\frac{\frac{1}{4}||[u,v]||^{2}}{1-\langle v,u \rangle}=0$. This implies that $\widetilde{G}$ must be compact. Additionally, being simply connected allows us to apply Wallach's Theorem \ref{Wthm}, leading to the conclusion that $\widetilde{G}\cong SU(2)$. Therefore, $G\cong SU(2)/L$ where $L$ is a finite subgroup of $SU(2)$.
    
\end{proof}

\begin{Rem}

In conclusion, we have that, If $\langle\cdot,\cdot\rangle\in\mathfrak{BI}(\mathfrak{g})$ is such that $K>0$, then $\mathfrak{g}\cong\mathfrak{su}(2)$  
 \ref{curvsecjuan}. Thus by \ref{round} there exist up to rescalling only one bi-invariant Riemannian geometry. Observe that is the geometry of the round 3-sphere (see \cite{spheres}).
    
\end{Rem}

\subsection{Non-negative Sectional Curvature}
Suppose $G$ is the semidirect product $P\rtimes H$ of a subgroup $H$ with a bi-invariant metric and a normal, commutative subgroup $P$ with a left-invariant and $Ad(h)$-invariant metric for all $h\in H$. Then any $\langle\cdot,\cdot\rangle\in\mathfrak{BI}(\mathfrak{h})$ can be extended to a metric in $\mathfrak{M}(\mathfrak{g})$ such that $K\geq0$, \cite[p. 297]{Milnor}. Milnor conjectures that these are the only groups that admit such a metric.

\subsection{Flat bi-invariant metrics}
 If $\langle\cdot,\cdot\rangle\in\mathfrak{BI}(\mathfrak{g})$ is such that $K=0$, then any element of $\mathfrak{BI}(\mathfrak{g})$ has zero sectional curvature \cite[Theorem 1.5]{Milnor}.

\subsection{Ricci Curvature}

\begin{enumerate}

\item If $G$ is compact and semi-simple, then any $\langle\cdot,\cdot\rangle\in\mathfrak{BI}(\mathfrak{g})$ satisfies $Ric>0$, and $\pi_{1}(G)$ is finite, see \cite[Theorem 2.2]{Milnor}.

\item If the universal cover $\widetilde{G}$ of $G$ is compact, then there exist a bi-invariant metric in $\mathfrak{BI}(\mathfrak{g})$ wich has constant Ricci curvature \cite[Corollary 7.7]{Milnor}. Moreover each element in $\mathfrak{EBI}(\mathfrak{g})$ has a representative with $Ric=1.$

\end{enumerate}

\subsection{Scalar Curvature} 

If a Lie group $G$ has a non-commutative compact subgroup $H$, then any element of $\mathfrak{BI}(\mathfrak{h})$ extends to a metric in $\mathfrak{M}(\mathfrak{g})$ with $\rho>0$, see \cite[Theorem 3.4]{Milnor}.

\section*{Funding and Conflict of Interest}

This work was financed by a Consejo Nacional de la Ciencia y Tecnología  (CONACyT) Master degree Scholarship No: 848752. The author declares that there is no conflict of interest. Data sharing is not applicable to this article as no data sets were generated or analysed for this work.


\begin{thebibliography}{20}

\bibitem{Adem} \textsc{Adem, Alejandro; Leida, Johan; Ruan, Yongbin}.
\textit{Orbifolds and Stringy Topology}. Cambridge University Press, Cambridge, 2007.

\bibitem{Alexandrino} \textsc{Alexandrino, Marcos; G. Bettiol, Renato}.
\textit{Lie Groups and Geometric Aspects of Isometric Actions}. Springer International Publishing, Switzerland 2015.

\bibitem{Cheeger} \textsc{Cheeger, Jeff}. \textit{Some examples of manifolds of nonnegative curvature}.
J. Differential Geometry \textbf{8}, (1973) 623–628. 

\bibitem{GroveZiller} \textsc{Grove, Karsten; Ziller, Wolfgang}. \textit{Cohomogeneity one manifolds with positive Ricci curvature}.
Inventiones Mathematicae \textbf{149}, no. 3, (2002) 619–646. 

\bibitem{spheres} \textsc{Kerin, M. and Wraith, D}.
\textit{Homogeneous Metrics on Spheres}. Bulletin of the Irish Mathematical Society \textbf{51}, (2003), 59–71. 

\bibitem{Kodama} \textsc{Kodama, Hiroshi; Takahara, Atsushi; Tamaru, Hiroshi}.
\textit{The space of left-invariant metrics on a Lie group up to isometry and scaling}. Manuscripta Math. \textbf{135}, no. 1-2, (2011) 229--243. 

\bibitem{Milnor} \textsc{Milnor, John}.
\textit{Curvatures of Left Invariant Metrics on Lie Groups}. Advances in Mathematics \textbf{21}, (1976) 293-329.

\bibitem{Murakami} \textsc{Murakami, Shingo}.
\textit{On the automorphism of a real semi-simple Lie algebra}. Journal of the Mathematical Society of Japan \textbf{4}, no. 2, (1952) 103--133.

\bibitem{Searle} \textsc{Searle, Catherine}.
\textit{Symmetries of spaces with lower curvature bounds}.
Notices American Mathematical Society \textbf{70}, no. 4, (2023) 564–575. 

\bibitem{SearleWilhelm} \textsc{Searle, Catherine; Wilhelm, Frederick}. \textit{How to lift positive Ricci curvature}. Geometry \& Topology \textbf{19}, no. 3, (2015) 1409–1475. 

\bibitem{Serre} \textsc{Serre, Jean-Pierre}.
\textit{Lie Algebras and Lie Groups}, Springer-Verlag Berlin Heidelberg 1992.

\bibitem{Serre comp} \textsc{Serre, Jean-Pierre}.
\textit{Complex Semisimple Lie Algebras}, Springer-Verlag Berlin Heidelberg 2001.

\bibitem{TuschmannWraith} \textsc{Tuschmann, Wilderich and Wraith, David J}. \textit{Moduli spaces of Riemannian metrics}. Second corrected printing. Oberwolfach Seminars, 46. Birkhäuser Verlag, Basel, 2015.

\bibitem{Wallach} \textsc{Wallach, Nolan R}. \textit{Compact homogeneous Riemannian manifolds of stricktly positive curvature}. Annals of Mathematics \textbf{96}, no. 2,
(1972) 277–295.


\end{thebibliography}
\end{document}